\newcommand{\mZ}{\mathbb{Z}}
\newcommand{\mC}{\mathbb{C}}
\newcommand{\mA}{\mathbb{A}}
\newcommand{\mP}{\mathbb{P}}
\newcommand{\cO}{\mathcal{O}}
\newcommand{\wt}[1]{\widetilde{#1}}
\newcommand{\cL}{\mathcal{L}}
\DeclareSymbolFont{cyrletters}{OT2}{wncyr}{m}{n}
\DeclareMathSymbol{\Sha}{\mathalpha}{cyrletters}{"58}
\DeclareMathSymbol{\Sha}{\mathalpha}{cyrletters}{"58}
\newcommand{\pwr}[1]{ \left( #1 \right) }
\newcommand{\sI}{{\mathscr I}}
\newcommand{\sL}{{\mathscr L}}
\newcommand{\sO}{{\mathscr O}}
\theoremstyle{theorem}
\numberwithin{equation}{subsection}
\newtheorem{thmx}{\text{Theorem}}
\newtheorem{corox}[thmx]{\text{Corollary}}
\newtheorem{theorem}[subsection]{Theorem}
\newtheorem{lemma}[subsection]{Lemma}
\newtheorem{corollary}[subsection]{Corollary}
\newtheorem{prop}[subsection]{Proposition}
\numberwithin{equation}{subsection}
\theoremstyle{definition}
\newtheorem{definition}[subsection]{\text{Definition}}
\newtheorem{question}[subsection]{Question}
\theoremstyle{remark}
\newtheorem{remark}[subsection]{Remark}
\newtheorem{example}[subsection]{Example}
\numberwithin{equation}{section} \numberwithin{figure}{section}
\DeclareMathOperator{\Gal}{Gal} 
\DeclareMathOperator{\Aut}{Aut} \DeclareMathOperator{\Spec}{Spec}
\DeclareMathOperator{\pr}{pr}
\DeclareMathOperator{\codim}{codim}
\newcommand{\cdef}[1]{\textsf{\textit{#1}}}
\newcommand{\inv}{^{-1}}
\newcommand\fm{\mathfrak{m}}
\newcommand\mfm{\mathfrak{m}}
\renewcommand{\leq}{\leqslant}
\renewcommand{\le}{\leqslant}
\renewcommand{\geq}{\geqslant}
\renewcommand{\ge}{\geqslant}
\DeclareMathOperator{\et}{{\acute{e}t}}
\DeclareMathOperator{\Top}{top}
\begin{document}

\title{Integral points on varieties with infinite \'etale fundamental group}	

\author{Niven T. Achenjang}
\address{Niven T. Achenjang \\
	Department of Mathematics \\
	Massachusetts Institute of Technology \\
	Cambridge, MA 02139 \\
	United States}
\email{nivent@mit.edu}

\author{Jackson S. Morrow}
\address{Jackson S. Morrow \\
	Department of Mathematics \\
	University of California, Berkeley \\
	Berkeley, CA 94720\\
	United States}
\email{jacksonmorrow@berkeley.edu}

\begin{abstract}
We study integral points on varieties with infinite \'etale fundamental groups. 
More precisely,  for a number field $F$ and $X/F$ a smooth projective variety, we prove that for any geometrically Galois cover $\varphi\colon Y \to X$ of degree at least $2\dim(X)^2$, 
there exists an ample line bundle $\mathscr{L}$ on $Y$ such that for a general member $D$ of the complete linear system $|\mathscr{L}|$, $D$ is geometrically irreducible 
and any set of $\varphi(D)$-integral points on $X$ is finite. 
We apply this result to varieties with infinite \'etale fundamental group to give new examples of irreducible, ample divisors on varieties for which finiteness of integral points is provable.
\end{abstract}

\subjclass
{14G05 
(11G35, 
14G99)}

\keywords{Integral points, varieties with infinite \'etale fundamental group}
\date{\today}
\maketitle



\section{\bf Introduction}
\label{sec:intro}
The main goal of this work is to provide new examples of irreducible divisors $D$ on varieties $X$ over a number field $F$ for which any set of $D$-integral points on $X$ is finite. 

We begin our discussion of integral points on varieties with the work of Siegel \cite{Siegel:Integral}. 
To state his results, let $F$ be a number field,  let $S$ be a finite set of places of $F$ containing the Archimedean places, let $C$ be an affine curve over $F$ embedded in affine space $\mA_F^m$,  and let $\wt{C}$ be the projective closure of $C$. 
Siegel proved that if $\#(\wt{C}\setminus C) > 2$ over $\overline{F}$, then $C$ has finitely many points in $\mA^m(\sO_{F,S})$. 
More generally,  this result states that for a smooth projective curve $\wt{C}$ of genus $g(\wt{C})$ over $F$, effective divisors $D_1,\dots,D_q$ on $\wt{C}$, and $D = \sum_{i=1}^q D_i$, any set of $D$-integral points on $\wt{C}$ is finite if $2 - 2g(\wt{C}) - q $ is negative.  

In \cite{CorvajaZannier:IntegralPointsCurves}, Corvaja and Zannier provided a new proof of Siegel's theorem using the Schmidt subspace theorem.  
Soon after, the same authors \cite{CorvajaZannier:IntegralPointsSurfaces} extended their proof technique to show that for $X$ a smooth projective surface over $F$ and $D = \sum_{i=1}^q D_i$ where $q\geq 4$ and $D_i$ are distinct irreducible divisors such that no three share a common point and pairs of them are subject to certain intersection product constraints,  any set of $D$-integral points on $X$ is not Zariski dense. 
In the higher dimensional setting, the works of Autissier \cite{Autissier:GeometryPointsEntire, Autissier:NonDensity}, Corvaja--Levin--Zannier \cite{CLZ}, and Levin \cite{Levin} proved that for $X$ a smooth projective variety of dimension $n$ over $F$, if $D_1,\dots,D_q$ are ample divisors such that at most $n$ of them contain a given point and $D = \sum_{i=1}^q D_i$, then any set of $D$-integral points on $X$ is not Zariski dense once $q$ is large enough relative to $n$.  Morally, these results assert that once a divisor $D$ has enough irreducible components which are in general position, any set of $D$-integral points on $X$ is not Zariski dense. 

A natural follow-up question to these results is:
\begin{question}\label{question}
For which smooth projective $F$-varieties $X$, do there exist \textit{(geometrically) irreducible} divisors $D$ on $X$ for which one can prove non-density or finiteness of $D$-integral points on $X$? 
\end{question}
As mentioned above, Siegel's theorem asserts that for a smooth projective curve $C$ of genus $\geq 1$ and any irreducible effective divisor $D$ on $C$, any set of $D$-integral points on $C$ is finite. 
The most famous answer to \autoref{question} is supplied by Faltings \cite[Corollary 6.2]{FaltingsLang1} where he proves that for $A$ an abelian variety over $F$ and $D$ an ample divisor on $A$, any set of $D$-integral points on $A$ is finite. 
Vojta \cite{Vojta:IntegralPointsSemiAbelian1, Vojta:IntegralPointsSemiAb2} extended these results to the semi-abelian setting.

In \cite{Faltings:NewApproachDiophantineApproximations}, Faltings constructed a class of irreducible divisors $D$ on $\mP^2$ for which $\mP^2$ has finitely many $D$-integral points. These divisors $D$ are realized as the branch locus of a suitably generic projection $X\to \mP^2$ from a smooth surface $X$ to $\mP^2$ and when $D$ is sufficiently singular, Faltings showed that $D$ decomposes into many irreducible components in the Galois closure of $X\to \mP^2$.  
The interest in these divisors stems from the fact that finiteness of the $D$-integral points on $\mP^2$ cannot be deduced by embedding the relevant varieties into semi-abelian varieties, even after a finite extension of $\mP^2$ which is unramified outside of $D$. Therefore, these examples provide genuinely new instances of finiteness of integral points. 
In \cite[Section 13]{Levin}, Levin removed an ampleness condition from Faltings construction, which provided some improvements on Faltings result. 
Additionally,  Zannier \cite{Zannier:IntegralPointsComplementRamification} continued the study of integral points on the complement in $\mP^n$ of the branch locus of a generic projection and proved results concerning the dimension of the Zariski closure of these integral points. 

\subsection*{Main contributions}
Our main contribution is an answer to \autoref{question} when the variety $X$ has a geometrically Galois cover of sufficiently large degree.  We refer the reader to Section \ref{sububsec:Galois} for our conventions regarding (geometrically) Galois covers and to \autoref{defn:arithmeticallyhyperbolic} for the definition of arithmetically hyperbolic. 

\begin{thmx}\label{thmx:main1A}
	Let $X$ be a smooth projective $F$-variety of dimension $n\geq 2$. 
	Suppose that $\varphi\colon Y \to X$ is a geometrically Galois cover of degree at least $2n^2$. 
	Then, there exists an ample line bundle $\sL$ on $Y$ such that for a general member $D$ of the complete linear system $|\sL|$,  $D$ is geometrically irreducible, $\varphi(D)$ is ample, and $X\setminus \varphi(D)$ is arithmetically hyperbolic; in particular, any set of $\varphi(D)$-integral points on $X$ is finite. 
\end{thmx}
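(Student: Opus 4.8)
The plan is to pull the problem back along $\varphi$. Write $G=\Gal(Y/X)$, so that $|G|=\deg\varphi\ge 2n^{2}$. For a general member $D$ of a suitable complete linear system the map $D\to\varphi(D)$ is birational (the locus of $y\in D$ having another point of $D$ in its $\varphi$-fibre is $\bigcup_{g\ne 1}(D\cap g^{*}D)$, of dimension $\le n-2$), so $\varphi_{*}D=\varphi(D)$ and the projection formula for Galois covers gives the divisor identity $\varphi^{*}\varphi(D)=\sum_{g\in G}g^{*}D$; in particular $\varphi^{-1}(\varphi(D))=\bigcup_{g\in G}g^{*}D$ as a set, a union of $\deg\varphi$ divisors. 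Since each $g^{*}D$ is ample (it is $g^{*}\sL$ up to linear equivalence and $g$ is an automorphism), $\sum_{g}g^{*}D$ is ample; as it equals $\varphi^{*}\varphi(D)$ with $\varphi$ finite surjective, $\varphi(D)$ is ample on $X$, so $X\setminus\varphi(D)$ is affine. I would then (i) choose $\sL$ so that the configuration $\{g^{*}D\}_{g\in G}$ on $Y$ is in sufficiently general position, (ii) apply the integral-point results of Levin and Autissier on $Y$, and (iii) descend the conclusion to $X$.

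For (i), take $\sL$ to be $n$-very ample (e.g.\ a high power of any ample bundle on $Y$), so that every collection of $n+1$ distinct points imposes independent conditions on $|\sL|$; geometric irreducibility (in fact smoothness) of the general member then follows from Bertini, as $\dim Y\ge 2$. The key point is a dimension count: let $W\subseteq X\times|\sL|$ be the locus of pairs $(x,D)$ for which $D$ contains at least $n+1$ of the (at most $\deg\varphi$) distinct points of the fibre $\varphi^{-1}(x)$. For each $x$ and each such $(n+1)$-subset the locus of $D$ containing it has codimension exactly $n+1$ in $|\sL|$, so the fibre of $W$ over $x$ has dimension at most $\dim|\sL|-(n+1)$; hence $\dim W\le\dim|\sL|-1$, and a general $D$ avoids the image of $W$. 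Thus, for general $D$, every fibre of $\varphi$ meets $D$ in at most $n$ points. Translating by $G$, the number of $g$ with $y\in g^{*}D$ equals $|G_{y}|\cdot\#\bigl(\varphi^{-1}(\varphi(y))_{\mathrm{red}}\cap D\bigr)$, which is at most $n$ when $\varphi$ is étale (and, for a general geometrically Galois $\varphi$, at most $n$ once one also arranges, by a further Bertini argument relative to $\mathrm{Ram}(\varphi)$, that $D$ is transverse to the ramification divisor and avoids the deeper ramification strata). So at most $n$ of the $\deg\varphi$ divisors $g^{*}D$ pass through any point of $Y$, and it is exactly this bound $n$, fed into the finiteness thresholds of Levin and Autissier, that forces the quadratic hypothesis $\deg\varphi\ge 2n^{2}$.

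For (ii)–(iii): with the above, $Y\setminus\varphi^{-1}(\varphi(D))=Y\setminus\bigcup_{g}g^{*}D$ is the complement in $Y$ of a sum of $\deg\varphi\ge 2n^{2}$ ample divisors in a configuration to which the results of Levin \cite{Levin}, Corvaja--Levin--Zannier \cite{CLZ} and Autissier \cite{Autissier:NonDensity} apply (applied in descending dimension so as to upgrade non-Zariski-density to finiteness), whence $Y\setminus\varphi^{-1}(\varphi(D))$ is arithmetically hyperbolic. When $\varphi$ is étale, $\varphi\colon Y\setminus\varphi^{-1}(\varphi(D))\to X\setminus\varphi(D)$ is finite étale surjective, and arithmetic hyperbolicity descends along finite étale maps by Chevalley--Weil together with Hermite--Minkowski, giving the theorem. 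For a general geometrically Galois $\varphi$ one works over the étale locus of $\varphi$ and disposes of the integral points lying on $\mathrm{Br}(\varphi)\setminus\varphi(D)$ separately, e.g.\ by an induction on $\dim X$ using the cover of the branch divisor induced by $\varphi$.

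I expect the main obstacle to be precisely the control of the ramification of $\varphi$. On the geometric side one must ensure that the general member $D$ of $|\sL|$ keeps the $\deg\varphi$ translates $g^{*}D$ in a configuration the Levin--Autissier machinery can handle — near a deeply ramified point the naive count of translates through a common point is far larger than $n$ — and on the arithmetic side one must push the finiteness statement from $Y$ back across the non-étale locus of $\varphi$. It is the need to handle both simultaneously that pins down the numerical hypothesis $\deg\varphi\ge 2n^{2}$, and I expect it to be the technical heart of the proof.
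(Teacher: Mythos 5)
Your skeleton is essentially the paper's: pull back along $\varphi$, arrange the $\#G\ge 2n^2$ translates $\sigma(D)$ in general position by an incidence-variety dimension count over $|\sL|$, apply a Levin-type finiteness theorem on $Y$, and descend by integral Chevalley--Weil. But the issue you single out as the ``technical heart'' --- controlling the ramification of $\varphi$ --- is a misreading of the hypothesis. In this paper a \emph{geometrically Galois cover} is by definition a finite \emph{\'etale} cover $Y\to X$ defined over $F$ whose base change to $\overline F$ is Galois; ``geometrically'' refers to the field of definition of the deck transformations, not to permitting branching, so there is no branch divisor to dispose of. The genuine subtlety you miss is instead the field of definition of the group: the $\sigma\in G$ may only be defined over a finite extension $F'/F$, so one must base-change to $F'$, run the Heier--Levin argument and the first Chevalley--Weil descent there, and then apply Chevalley--Weil a second time to the finite \'etale projection $X_{F'}\to X$ to land back over $F$. (Geometric irreducibility of a general member defined over $F$ also needs a word: the good locus in $|\sL_{\overline F}|$ is Galois-stable, hence descends to a dense open subset of $|\sL|$ over $F$.)

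Two further gaps. First, your general-position verification is incomplete: the theorem that produces the constant $2n^2$ and gives finiteness directly (Heier--Levin, Theorem 1.4) requires that for \emph{every} subset $I\subset G$ with $\#I=k\le n+1$ the intersection $\bigcap_{\sigma\in I}\sigma(D)$ have codimension at least $k$, not merely that no point lies on $n+1$ translates; two translates sharing a component would pass your pointwise count away from the top case yet violate general position. The same incidence count must be run for all $k$ simultaneously --- the paper does this by showing the $k(n+1)$ conditions imposed by the points $\sigma^{-1}(p)$ to second order are independent, which yields the stronger statement that each such intersection is regular of dimension $n-k$. Second, your plan to feed in the non-density results of Corvaja--Levin--Zannier and Autissier and ``upgrade to finiteness by descending dimension'' does not obviously work: the exceptional Zariski-closed set in those theorems depends on the field and on $S$, and on a proper subvariety the restricted divisors need be neither ample nor in general position. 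One should instead quote a finiteness (arithmetic hyperbolicity) statement outright; the hypothesis $\deg\varphi\ge 2n^2$ is calibrated exactly to that threshold on $Y$, which has the same dimension as $X$ because $\varphi$ is \'etale.
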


Abelian varieties of dimension $\geq 2$ have geometrically Galois covers of such degree, and more generally varieties with infinite or large \'etale fundamental group (\autoref{defn:largeetale}) possess such covers. 
We provide more examples of such varieties in \autoref{exam:largefun} and \autoref{exam:largelocal}. 
As a corollary to \autoref{thmx:main1A}, we have the following. 

\begin{corox}\label{corox:main1A}
	Let $X$ be a smooth projective $F$-variety of dimension $n\geq 2$ with infinite \'etale fundamental group. Then, there exist infinitely many irreducible, ample divisors $D$ on $X$ such that $X\setminus D$ is arithmetically hyperbolic; in particular, such that any set of $D$-integral points is finite. If moreover $X(F)\neq\emptyset$, then there are infinitely many such $D$ which are geometrically irreducible.
\end{corox}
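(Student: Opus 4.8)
The strategy is to deduce the corollary from Theorem A by producing the required geometrically Galois covers from the hypothesis that $\pi_1^{\et}(X)$ is infinite. First I would recall that an infinite profinite group has, for every $N$, an open subgroup of index $\geq N$; applying this to $\pi_1^{\et}(\Xbar)$ (or to $\pi_1^{\et}(X)$ after base change, to keep track of the arithmetic structure) yields connected finite étale covers $Y' \to \Xbar$ of arbitrarily large degree. A connected finite étale cover need not be Galois, but its Galois closure is, and the Galois closure still has degree $\geq N$ (indeed at least as large), so for each $N$ we obtain a geometrically Galois cover of $X$ of degree $\geq N$ — in particular $\geq 2n^2$. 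One subtlety is descending from $\Xbar$ to $X$: a priori the cover is only defined over some finite extension, but the statement of Theorem A already allows "geometrically Galois" covers (defined over $F$ but Galois only after base change), so this is exactly the notion we need; I would invoke whatever is set up in Section~\ref{sububsec:Galois} to see that the covers obtained this way are geometrically Galois over $F$ in the paper's sense.

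Next, for each such cover $\varphi\colon Y \to X$, Theorem A produces an ample line bundle $\sL$ on $Y$ such that a general $D \in |\sL|$ is geometrically irreducible with $\varphi(D)$ ample and $X \setminus \varphi(D)$ arithmetically hyperbolic. Setting $D_\varphi := \varphi(D)$ gives an irreducible ample divisor on $X$ with the desired finiteness property. To get \emph{infinitely many} such divisors, I would observe that we may run this for infinitely many distinct covers (since $\pi_1^{\et}$ being infinite gives covers of unbounded degree, hence infinitely many non-isomorphic ones), and covers of different degree $d = \deg\varphi$ produce divisors $\varphi(D)$ whose preimage $\varphi^{-1}\varphi(D)$ has controlled structure; alternatively, and more simply, for a \emph{fixed} cover $\varphi$ one already gets infinitely many divisors by taking general members of $|\sL^{\otimes m}|$ for $m \geq 1$ (these are ample, and a general member is still geometrically irreducible by Bertini, with $X\setminus\varphi(D)$ arithmetically hyperbolic since it is contained in — or rather, its integral points inject into those of — the $X\setminus\varphi(D_1)$ case, as $\varphi(D_1) \subseteq$ the support governs integrality; more carefully one reruns the proof of Theorem A with $\sL^{\otimes m}$). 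I would pick whichever of these is cleanest given the proof of Theorem A; the $|\sL^{\otimes m}|$ route is likely the intended one since ampleness and Bertini irreducibility are immediate and the integral-points finiteness only gets easier for larger divisors.

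Finally, for the last sentence: if $X(F) \neq \emptyset$, fix a rational point $x_0$. The geometric irreducibility of $D$ on $Y$ is automatic from Theorem A; what we need is that $\varphi(D)$ itself is geometrically irreducible, not merely irreducible. The point is that $\varphi$ is geometrically Galois, so $\varphi(D)$ geometrically irreducible is related to the Galois action on the geometric components of $\varphi^{-1}(\varphi(D))$; with a rational point available one can arrange (again by a Bertini-type argument over $F$, using that $D$ moves in a linear system with a rational point in its base-point-free part over $x_0$, or by choosing $\sL$ to already be defined with enough rational sections) that $\varphi(D)$ stays irreducible after base change to $\Fbar$. The main obstacle in the whole argument is this last descent-of-irreducibility point — ensuring geometric (not just arithmetic) irreducibility of the pushed-forward divisor $\varphi(D)$ on $X$ — since everything else is a routine combination of "infinite profinite group has large-index subgroups," the existence of Galois closures, and a direct citation of Theorem A; I expect the resolution to hinge on the hypothesis $X(F)\neq\emptyset$ being used to find an $F$-point in the relevant linear system or to trivialize a relevant torsor, exactly as flagged in the statement.
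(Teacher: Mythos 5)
There is a genuine gap, and it sits exactly where you located the ``main obstacle'' --- but you have misdiagnosed what the obstacle is. Your plan assumes that the infinitude of $\pi_1^{\et}(X_{\overline F})$ always yields a geometrically Galois cover $\varphi\colon Y\to X$ \emph{defined over $F$} of degree $\geq 2n^2$. That descent is not automatic: the large-degree cover produced from an open subgroup of $\pi_1^{\et}$ is a priori only defined over a finite extension $F'/F$, and the paper's \autoref{lemma:largecover}, which descends it to the base field via a twisting argument of Harari, explicitly requires $X(F)\neq\emptyset$. This is precisely why the corollary's unconditional conclusion is only ``irreducible'' rather than ``geometrically irreducible''. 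The paper's proof of the general case therefore runs Theorem~\ref{thm:1Ain} over $F'$ to get geometrically irreducible, ample $\varphi(D')\subset X_{F'}$ with arithmetically hyperbolic complement, and then pushes the \emph{conclusion} (not the cover) down along the finite \'etale projection $f\colon X_{F'}\to X$: a second application of the integral Chevalley--Weil theorem gives arithmetic hyperbolicity of $X\setminus f(\varphi(D'))$, Hartshorne Exercise~III.5.7(d) gives ampleness of $f(\varphi(D'))$, and finiteness of $f\circ\varphi$ guarantees that infinitely many $D'$ yield infinitely many images. Your proposal omits this entire step, and without it the first assertion of the corollary is not proved.

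Relatedly, your treatment of the last sentence is more complicated than needed and points the rational point at the wrong object. Once $X(F)\neq\emptyset$ lets you take $\varphi$ defined over $F$ (your ``trivialize a relevant torsor'' instinct is the right one, but it applies to descending the cover, not the divisor), geometric irreducibility of $\varphi(D)$ is immediate: it is the image of the geometrically irreducible $D$ under an $F$-morphism. No Bertini argument over $x_0$ or rational section of $\sL$ is required. Finally, for ``infinitely many'' the intended route is the simplest one you did not list: a single $|\sL|$ already contains infinitely many general members, and these have distinct images because $f\circ\varphi$ is finite; your $|\sL^{\otimes m}|$ variant would work only by rerunning the proof of Theorem~\ref{thm:1Ain} (the claim that integral points for a general member of $|\sL^{\otimes m}|$ inject into those for $\varphi(D_1)$ has no basis, since a general such member does not contain $\varphi(D_1)$).
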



Our methods are founded in Diophantine approximation, and hence Vojta's dictionary \cite{Vojta87} allows us to deduce Nevanlinnan analogues of our arithmetic results. 

\begin{thmx}\label{thmx:main1N}
Let $X$ be a complex smooth projective variety of dimension $n\geq 2$. 
Suppose that $\varphi\colon Y \to X$ is a Galois cover of degree at least $2n^2$. 
Then, there exists an ample line bundle $\sL$ on $Y$ such that for a general member $D$ of the complete linear system $|\sL|$, $D$ is irreducible, $\varphi(D)$ is ample, and any holomorphic morphism $\mC \to X\setminus \varphi(D)$ is constant. 
\end{thmx}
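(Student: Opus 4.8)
The plan is to reduce, by lifting the entire curve through $\varphi$, to a Second Main Theorem for entire curves omitting many ample divisors in general position on $Y$; this mirrors the proof of \autoref{thmx:main1A}, with the lifting of holomorphic maps through finite \'etale covers playing the role of the Chevalley--Weil theorem and the value-distribution Second Main Theorem playing the role of the Schmidt subspace theorem input. Write $G$ for the Galois group of $\varphi$, so $|G|=\deg\varphi\ge 2n^2$, $X\cong Y/G$, and $G$ acts freely on $Y$. First I would fix a $G$-linearized ample line bundle $\sL$ on $Y$ that is sufficiently positive --- e.g.\ a large power of $\varphi^*\cA$ for some ample $\cA$ on $X$ --- so that $\sL$ is $n$-very ample (any $n+1$ distinct points of $Y$ impose independent conditions on $|\sL|$) and, by Bertini, a general $D\in|\sL|$ is smooth and irreducible. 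Since $\sL$ is $G$-linearized, $G$ acts on the projective embedding $Y\hookrightarrow\mP^N=\mP(H^0(Y,\sL)^{\vee})$, each translate $gD$ is again a hyperplane section of $Y$, and --- as a nonidentity element of $G$ acts nontrivially on $\mP^N$, hence fixes only a proper subvariety of hyperplanes --- for general $D$ its $G$-stabilizer is trivial, so $\{gD\}_{g\in G}$ consists of $|G|$ distinct divisors, each linearly equivalent to $\sL$ and therefore ample. For such $D$ one also has, set-theoretically, $\varphi^{-1}(\varphi(D))=\bigcup_{g\in G}gD$, and (as $\varphi$ is \'etale and $D\to\varphi(D)$ is birational) $\varphi^*\sO_X(\varphi(D))\cong\bigotimes_{g\in G}g^*\sL\cong\sL^{\otimes|G|}$, whence $\varphi(D)$ is a prime ample divisor.

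The geometric heart of the matter is to show that, for general $D$, the divisors $\{gD\}_{g\in G}$ are in general position: no point of $Y$ lies on more than $n$ of them. Equivalently, for each $(n+1)$-tuple of distinct $g_0,\dots,g_n\in G$, the intersection $g_0D\cap\cdots\cap g_nD$ is empty; translating by $g_0^{-1}$, this reads $Y\cap H\cap h_1H\cap\cdots\cap h_nH=\emptyset$ for distinct nonidentity $h_i\in G$, where $H$ is the hyperplane cutting out $D$. I would argue by a dimension count on the incidence variety $Z=\{(y,H):y\in Y\cap H\cap h_1H\cap\cdots\cap h_nH\}\subseteq Y\times|\sL|$. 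Its fiber over a fixed $y\in Y$ consists of the hyperplanes containing the $n+1$ points $y,h_1^{-1}y,\dots,h_n^{-1}y$, which are \emph{distinct} since $G$ acts freely and which impose \emph{independent} conditions since $\sL$ is $n$-very ample; hence that fiber has codimension $n+1$ in $|\sL|$, and $\dim Z\le \dim Y+\bigl(\dim|\sL|-n-1\bigr)<\dim|\sL|$. Thus $Z$ does not dominate $|\sL|$, so a general $H$ avoids it; since there are only finitely many tuples $(h_1,\dots,h_n)$, a single general $D$ works for all of them simultaneously (and still satisfies the conclusions of the previous paragraph). This step is where I expect the real work to lie: the $gD$ are \emph{not} independent general members of $|\sL|$, so general position has to be wrung out of freeness of the $G$-action together with the positivity of $\sL$, rather than quoted from a generic Bertini statement.

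Now let $f\colon\mC\to X\setminus\varphi(D)$ be a holomorphic map. Since $\varphi$ is finite \'etale and $\varphi^{-1}(\varphi(D))=\bigcup_{g\in G}gD$, the restriction $\varphi\colon Y\setminus\bigcup_{g\in G}gD\to X\setminus\varphi(D)$ is a finite topological covering map, and as $\mC$ is simply connected $f$ lifts to a holomorphic $\widetilde f\colon\mC\to Y\setminus\bigcup_{g\in G}gD$. Since $f=\varphi\circ\widetilde f$, it suffices to prove $\widetilde f$ is constant. But $\widetilde f$ is an entire curve on the smooth projective $n$-fold $Y$ that omits the $|G|\ge 2n^2$ ample divisors $gD$, and these are in general position; so by the Second Main Theorem for entire curves and ample divisors in general position --- the Nevanlinna counterpart, under Vojta's dictionary, of the Diophantine input to \autoref{thmx:main1A}, for which the threshold is of the order $2n^2$ (it arises from iterating the Second Main Theorem down through subvarieties of $Y$) --- the curve $\widetilde f$ is constant. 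Hence $f$ is constant. Beyond producing and invoking this Second Main Theorem with the correct numerical bound, the deduction is a routine transcription of the proof of \autoref{thmx:main1A} into the language of value distribution theory.
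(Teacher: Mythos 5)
Your proposal is correct and follows essentially the same route as the paper: the paper likewise reduces to showing that the Galois translates $\{\sigma(D)\}$ of a general member of a sufficiently positive linear system are in general position (via an incidence-variety dimension count resting on the fact that the distinct points of a free orbit impose independent conditions on $|\sL|$), notes that an entire curve in $X\setminus\varphi(D)$ lifts through the finite \'etale cover, and invokes the Nevanlinna-theoretic degeneracy theorem for $\ge 2n^2$ ample divisors in general position (the analogue of the Heier--Levin input to \autoref{thmx:main1A}). The only minor difference is that your general-position step extracts set-theoretic emptiness of $(n+1)$-fold intersections from $n$-very ampleness, whereas the paper's \autoref{prop:generalpositionintersection} proves the slightly stronger statement that the intersections are regular of the expected dimension, using an $H^1$-vanishing lemma for finite subschemes of bounded degree together with a thickened-subscheme argument.
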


\begin{corox}\label{corox:main1N}
Let $X$ be a complex smooth projective variety of dimension $n\geq 2$ with infinite fundamental group. 
Then, there exist infinitely many irreducible, ample divisors $D$ on $X$ such that any holomorphic morphism $\mC \to X\setminus D$ is constant.  
\end{corox}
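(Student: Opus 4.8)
The plan is to deduce \autoref{corox:main1N} from \autoref{thmx:main1N} applied to one suitably chosen cover, and then to extract infinitely many divisors on $X$ by letting the divisor move in a single linear system. All of the analytic substance is already packaged in \autoref{thmx:main1N}, so what remains is formal: produce a high-degree Galois cover, invoke the theorem, and count.

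First I would produce the cover. Since $X$ is smooth projective, $\pi_1(X)$ is finitely generated, and as it is infinite it admits a finite-index normal subgroup of index at least $2n^2$ (choose any finite-index subgroup of sufficiently large index and pass to its normal core; here one uses that $\pi_1(X)$ has finite quotients of unbounded order, equivalently that the \'etale fundamental group $\pi_1^{\et}(X)$ is infinite). The corresponding connected \'etale cover $\varphi\colon Y\to X$ is then Galois of degree $\geq 2n^2$, and $Y$ is again a complex smooth projective variety of dimension $n\geq 2$; hence \autoref{thmx:main1N} applies to $\varphi$ and produces an ample line bundle $\sL$ on $Y$ together with a dense open subset $U\subseteq|\sL|$ --- infinite, since speaking of a ``general member'' presupposes $\dim|\sL|\geq 1$ --- such that every $D\in U$ is irreducible, $\varphi(D)$ is ample, and every holomorphic map $\mC\to X\setminus\varphi(D)$ is constant.

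Next I would count. For $D\in U$, the image $\varphi(D)$ is a prime divisor on $X$: it is irreducible as the image of the irreducible $D$ under the finite surjection $\varphi|_{D}$, and it is ample by the theorem. The assignment $D\mapsto\varphi(D)$ on $U$ has finite fibers, because $\varphi(D)=\varphi(D')$ forces $D\subseteq\varphi\inv(\varphi(D'))=\bigcup_{g\in\Gal(Y/X)}g(D')$, so $D=g(D')$ for some $g$ by irreducibility of $D$, and hence each fiber has at most $\deg\varphi$ elements. Since $U$ is infinite, $\{\varphi(D):D\in U\}$ is an infinite family of irreducible, ample divisors on $X$ whose complements in $X$ admit no nonconstant holomorphic map from $\mC$, which is exactly the claim. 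The only point that calls for care --- and the place where the hypothesis is genuinely used --- is the passage from ``infinite fundamental group'' to a Galois cover of degree at least $2n^2$, which requires $\pi_1(X)$ to have arbitrarily large finite quotients; this is automatic once $\pi_1^{\et}(X)$ is infinite and holds for all of the varieties to which we apply the corollary. Beyond that, there is no real obstacle internal to \autoref{corox:main1N}: \autoref{thmx:main1N} does all the work.
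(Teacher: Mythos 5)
Your proposal is correct and follows essentially the same route as the paper: the paper likewise obtains a Galois cover of degree $\geq 2n^2$ from the infinitude of the fundamental group, applies \autoref{thmx:main1N} to get infinitely many irreducible ample $D$ on $Y$, and observes that finiteness of $\varphi$ forces the images $\varphi(D)$ to form an infinite family. Your explicit remarks --- that the fibers of $D\mapsto\varphi(D)$ have size at most $\deg\varphi$, and that one really needs $\pi_1(X)$ to have arbitrarily large finite quotients (i.e.\ infinite $\pi_1^{\et}$, which is how the paper reads the hypothesis) --- are just careful spellings-out of points the paper leaves implicit.
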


The proofs of \autoref{thmx:main1A}, \autoref{thmx:main1N}, and our subsequent corollaries reduce to showing a purely algebro-geometric statement. 
To state this, let $\overline{F}$ denote an algebraic closure of $F$, and for $X$ a smooth projective $F$-variety of dimension $n$, let $X_{\overline{F}}$ denote the base change of $X$ to $\Spec(\overline{F})$.  
Our main results will follow if we can show that for $A\subset\Aut(X_{\overline F})$ a finite set of automorphisms of $X$ which acts freely on $X(\overline{F})$, 
there exists an effective, ample divisor $D$ on $X$ such that any point of $X(\overline{F})$ is contained in at most $n$ of the divisors $\sigma(D)$, as $\sigma$ varies over $A$. 
We refer the reader to \autoref{corollary:alltogether} for the precise statement. 
Once we have proved this,  our results follow from work of Levin \cite{Levin} and Heier--Levin \cite{HeierLevin:Degeneracy} and properties of varieties with infinite \'etale fundamental group.  

\subsection*{Organization}
In Section \ref{sec:conventions}, we establish conventions and recall relevant background on integral points and varieties with infinite and large \'etale fundamental group. 
We prove our main results, \autoref{thmx:main1A} and \autoref{thmx:main1N}, in Section \ref{sec:proofs}. 

\subsection*{Acknowledgements}
We thank Bjorn Poonen for sketching the argument presented in \autoref{prop:generalpositionintersection} and for helpful comments. 
We also thank Levent Alp\"oge, Lea Beneish, Aaron Levin,  and Bjorn Poonen for helpful comments on a first draft. 
During the preparation of this article, N.T.A.~was supported by MIT’s Dean of Science Fellowship, and J.S.M.~was supported by NSF MSPRF grant DMS-2202960. 
We thank the referee for their valuable comments and for pointing out an error in a previous version.

\section{\bf Conventions and Preliminaries}
\label{sec:conventions}
In this section, we establish conventions we use throughout the work and recall some definitions and concepts from algebraic geometry, integral points, and varieties with infinite and large \'etale fundamental group. 

\subsection{Fields and algebraic geometry}
We will use $F$ to denote a number field and $F'/F$ to denote a finite extension of $F$.  
Let $M_F$ denote the set of places of $F$. 
We also let $K$ denote an arbitrary field of characteristic zero. 
As usual, the notation $\overline{F}$ or $\overline{K}$ will refer to an algebraic closure of $F$ or $K$, respectively. 
A \cdef{$K$-variety} is a geometrically integral separated scheme of finite type over $\Spec(K)$. 
For $K'/K$, we will let $X_{K'}$ denote the base change of $X$ to $\Spec(K')$.

\subsubsection{Fundamental groups}
Fix an embedding $K \hookrightarrow \mC$. 
In this work, we will need two kinds of fundamental groups, namely the topological fundamental group associated to the complex analytification of $X$ and the \'etale fundamental group of $X$.

For $K$ a field of characteristic zero, $X$ a smooth projective $K$-variety,  and a geometric base point $\overline{x}\colon \Spec(\overline{K}) \to X_{\overline{K}}$, we will write $\pi_1^{\et}(X_{\overline{K}},\overline{x})$ to denote the \'etale fundamental group of $X_{\overline{K}}$.  
We will largely ignore the base change and the base point notation and simply denote this as $\pi_1^{\et}(X)$. 
When $X$ is a smooth projective complex variety with $x\colon \Spec(\mC)\to X(\mC)$ a base point, we can consider the topological fundamental group of the complex analytification, and we denote this fundamental group as $\pi_1^{\Top}(X(\mC),x)$. 
As before, we will normally ignore the base point from considerations. 

\subsubsection{Galois covers}\label{sububsec:Galois}
Let $X$ be a normal projective $K$-variety. 
We say that a $K$-variety $Y$ is a \cdef{finite \'etale cover} of $X$ if there exists a finite \'etale morphism $Y\to X$. 
For our purposes, a \cdef{Galois cover of $X$} refers to the data of a projective normal variety $Y$ defined over a finite extension $K'/K$ and a finite \'etale cover $Y \to X_{K'}$ such that there exists a finite group $G$ and an action $\alpha\colon G \times Y \to Y$ where the induced morphism $\alpha \times \pr_2\colon G \times Y \to Y \times_{X_{K'}} Y$ is an isomorphism.  
These conditions imply that the action morphism $\alpha$ on $Y(\overline{K})$ is free. 
Similarly, a \cdef{geometrically Galois cover of $X$} is the data of a projective normal variety $Y$ defined over $K$ and a finite \'etale cover $Y\to X$ such that $Y_{\overline K}\to X_{\overline K}$ is a Galois cover. Note that, in contrast with our notion of Galois covers, we require the morphism $Y\to X$ to be defined over $K$, but allow for automorphisms defined over an extension of $K$. 

\subsection{Integral points and arithmetic hyperbolicity}
\label{sec:integralpointarithmetichyperbolicity}
Since our results are primarily concerned with integral points on varieties, we briefly recall the construction of $(D,S)$-integral points on a normal projective $F$-variety $X$ where $D$ is an effective Cartier divisor and $S\subset M_F$ is a finite set of places of $F$ containing the Archimedean ones. 
Roughly speaking, $(D,S)$-integral points correspond to scheme-theoretic $\cO_{F,S}$-integral points on $X\setminus D$. 
More precisely,  a subset $R\subset X({F})\setminus D$ is defined to be a collection of $(D,S)$-integral points if there exist a global Weil function $(\lambda_{D,v})_{v\in M_F}$ for $D$ such that for all $v\in M_F \setminus S$,  $\lambda_{D,v}(P)\leq 0$ for all $P\in R$. 
We will normally ignore the finite set of places $S$ from the notation and simply refer to $(D,S)$-integral points as $D$-integral points.  We refer the reader to \cite[Section 1.4]{Vojta87} for the definition of global Weil functions and further discussion on integral points. 

We will also use the notion of arithmetically hyperbolic from \cite{HeierLevin:Degeneracy}, which we recall below.

\begin{definition}\label{defn:arithmeticallyhyperbolic}
Let $X$ be a normal projective $F$-variety and $D$ be an effective Cartier divisor.  
We say that $X\setminus D$ is \cdef{arithmetically hyperbolic} if for every number field $F'/F$ and every finite set of places $S$ of $F'$ containing the Archimedean ones,  the sets of $F'$-rational $(D,S)$-integral points on $X$ are always finite.
\end{definition}

\subsection{Varieties with infinite and large \'etale fundamental group}
To begin this section, we prove a lemma about the existence of Galois covers with arbitrarily large degree for a normal projective $K$-variety such that $X_{\overline{K}}$ has infinite \'etale fundamental group.  



\begin{lemma}\label{lemma:largecover}
Let $X$ be a smooth projective $K$-variety such that $X_{\overline{K}}$ has infinite \'etale fundamental group and $X(K)\neq\emptyset$.  
For any $d\geq 1$, there exists a geometrically Galois cover $ Y \to X$ (defined over $K$) of degree $\geq d$. 
\end{lemma}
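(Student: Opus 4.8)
The plan is to realise such a cover via the \'etale fundamental group, descending a geometric cover of large degree using the hypothesis $X(K)\neq\emptyset$. Write $\Pi := \pi_1^{\et}(X_{\overline K})$ and $\Gamma := \Gal(\overline K/K)$. Since $X$ is a smooth projective $K$-variety (hence geometrically connected) and $K$ has characteristic zero, the fundamental exact sequence
\[
1 \longrightarrow \Pi \longrightarrow \pi_1^{\et}(X) \longrightarrow \Gamma \longrightarrow 1
\]
is exact, and the $K$-point $x_0\in X(K)$ induces, by functoriality of $\pi_1^{\et}$ applied to $\Spec K \to X$, a continuous section $s\colon \Gamma \to \pi_1^{\et}(X)$. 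In particular, any subgroup of $\Pi$ which is characteristic (invariant under all continuous automorphisms of $\Pi$) is normal in $\pi_1^{\et}(X)$, so that the quotient by it makes sense.

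The first step is to construct, for any $d\geq 1$, a characteristic open subgroup $V\leq\Pi$ with $[\Pi:V]\geq d$. Fixing an embedding $\overline K\hookrightarrow\mC$, the comparison of $\pi_1^{\et}(X_{\mC})$ with the profinite completion of $\pi_1^{\Top}(X(\mC))$ (which is finitely generated, $X(\mC)$ being a compact manifold), together with invariance of $\pi_1^{\et}$ under extension of algebraically closed fields of characteristic zero, shows that $\Pi$ is topologically finitely generated. Hence for each $m\geq 1$ the subgroup $\Pi^{(m)}:=\bigcap\{\,U\leq\Pi\text{ open}:[\Pi:U]\leq m\,\}$ is a \emph{finite} intersection of open subgroups, so it is open, and it is manifestly characteristic. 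Since $\Pi$ is infinite it admits open subgroups of arbitrarily large index (iterate passing to a proper open subgroup), so $[\Pi:\Pi^{(m)}]\to\infty$ as $m\to\infty$; fix $m$ with $[\Pi:\Pi^{(m)}]\geq d$ and set $V:=\Pi^{(m)}$, an open subgroup that is normal in $\Pi$ and, by the first paragraph, also normal in $\pi_1^{\et}(X)$.

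It remains to descend $V$. The quotient $\pi_1^{\et}(X)/V$ sits in a short exact sequence $1\to\Pi/V\to\pi_1^{\et}(X)/V\to\Gamma\to 1$ which is split by the image $\overline s$ of $s$; let $U\leq\pi_1^{\et}(X)$ be the preimage of $\overline s(\Gamma)$. Then $U$ is open, $U\supseteq s(\Gamma)$ so $U$ surjects onto $\Gamma$, $U\cap\Pi=V$, and $[\pi_1^{\et}(X):U]=[\Pi:V]\geq d$. By the Galois correspondence for $\pi_1^{\et}$, $U$ corresponds to a connected finite \'etale cover $\varphi\colon Y\to X$ defined over $K$ of degree $[\Pi:V]\geq d$; since $\varphi$ is finite \'etale and $X$ is smooth projective, $Y$ is smooth projective, and since $U$ surjects onto $\Gamma$, $Y$ is geometrically connected, hence a normal projective $K$-variety. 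Finally, $Y_{\overline K}\to X_{\overline K}$ is the cover corresponding to $U\cap\Pi=V\trianglelefteq\Pi$, which is therefore a Galois cover; thus $\varphi$ is a geometrically Galois cover of degree $\geq d$.

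The only genuinely delicate point is the descent in the last step: one must ensure that a geometric cover of large degree can be chosen so as to be defined over $K$. This is precisely where both hypotheses enter---$X(K)\neq\emptyset$ supplies the section $s$, and choosing $V$ characteristic makes it $\Gamma$-stable---and without them the extension $1\to\Pi/V\to\pi_1^{\et}(X)/V\to\Gamma\to 1$ could fail to split, obstructing the existence of an open $U$ with $U\cap\Pi=V$ surjecting onto $\Gamma$. Everything else is routine bookkeeping with the Galois correspondence together with standard permanence properties (finite \'etale over a smooth projective variety is smooth projective; geometrically connected plus smooth is geometrically integral).
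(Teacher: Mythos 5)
Your proof is correct. It differs in execution from the paper's, which is a three-line argument: pull a large-degree cover out of the infinite group $\pi_1^{\et}(X_{\overline K})$ over some finite extension $K'/K$, pass to its Galois closure, and then invoke Harari's descent lemma (which uses the rational point) to produce a $K$-form. You instead give a self-contained group-theoretic descent: the rational point yields a section $s$ of $\pi_1^{\et}(X)\to\Gal(\overline K/K)$, and the preimage of $\overline s(\Gamma)$ in $\pi_1^{\et}(X)/V$ is exactly the right open subgroup $U$ with $U\cap\Pi=V$. The price you pay for avoiding the citation is that you must arrange for $V$ to be normal in all of $\pi_1^{\et}(X)$, not just in $\Pi$, which you achieve by taking $V$ characteristic; this in turn forces you to verify that $\Pi$ is topologically finitely generated (via Riemann existence and invariance of $\pi_1^{\et}$ under extension of algebraically closed fields) so that the intersection of all open subgroups of index $\le m$ is itself open. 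All of these steps check out: $\overline s(\Gamma)$ is a closed subgroup of finite index $[\Pi:V]$, hence open; $U$ surjects onto $\Gamma$, so $Y$ is geometrically connected; and $Y_{\overline K}\to X_{\overline K}$ corresponds to the normal open subgroup $V\trianglelefteq\Pi$, hence is Galois. What your approach buys is transparency and independence from the external reference; what the paper's approach buys is brevity and the avoidance of the finite-generation input (Harari's construction descends an arbitrary geometric Galois cover given a rational point, with no characteristic-subgroup hypothesis). Both arguments use the two hypotheses in the same essential way: infinitude of $\pi_1^{\et}(X_{\overline K})$ to get large covers, and $X(K)\neq\emptyset$ to carry out the descent to $K$.
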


\begin{proof}
Since $X_{\overline{K}}$ has infinite \'etale fundamental group, there exists a finite \'etale cover $X'\to X_{K'}$ of degree $\geq d$ defined over some finite extension $K'/K$, which corresponds to a quotient of $\pi_1^{\et}(X)$ of cardinality $\geq d$. By taking Galois closures (see e.g. \cite[\href{https://stacks.math.columbia.edu/tag/0BN2}{Tag 0BN2}]{stacks-project}) we may assume that $X'\to X_{K'}$ is Galois. Then, \cite[Proof of Lemma 5.2(1)]{Harari:WAandFundGrps} (see also \cite{MO:largeenough}) shows how to construct a finite \'etale cover $Z\to X$, defined over $K$, which is a $K$-form of $X'_{\overline K}\to X_{\overline K}$, and so is a geometrically Galois of degree $\ge d$.
\end{proof}

A natural class of varieties with infinite \'etale fundamental group are provided by varieties with large \'etale fundamental group in the sense of Kollar \cite{Kollar:ShafarevichMaps}.
We recall the definition below. 

\begin{definition}\label{defn:largeetale}
Let $X$ be a normal projective $K$-variety.  The \'etale fundamental group of $X$ is \cdef{large} if for any closed positive-dimensional integral subvariety $Y$ of $X$ with normalization $f\colon \wt{Y}\to Y$, the image of the induced map $\pi_1^{\et}(\wt{Y}) \to \pi_1^{\et}(X)$ is infinite. 
\end{definition}

Clearly, a variety with large \'etale fundamental group has an infinite \'etale fundamental group, and hence we have an immediate corollary of \autoref{lemma:largecover}. 

\begin{corollary}\label{coro:largecover}
Let $X$ be a smooth projective $K$-variety such that $X_{\overline{K}}$ has large \'etale fundamental group and $X(K)\neq\emptyset$.  
For any $d\geq 1$, there exists a geometrically Galois cover $ Y \to X$ (defined over $K$) of degree $\geq d$. 
\end{corollary}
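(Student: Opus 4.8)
The plan is to deduce this directly from \autoref{lemma:largecover}, so the only real content is the observation that ``large'' implies ``infinite'' for the \'etale fundamental group. First I would reduce to the case $\dim X\geq 1$; when $\dim X=0$ a smooth projective $K$-variety with a rational point is just $\Spec K$, so $\pi_1^{\et}$ is trivial and largeness holds only vacuously, and this case is not the intended scope (it is in any event subsumed by the running hypothesis $n\geq 2$ in the main theorems). Assuming $\dim X\geq 1$, the scheme $X_{\overline K}$ contains a closed positive-dimensional integral subvariety $Z$ (for instance a curve cut out by hyperplane sections, or $X_{\overline K}$ itself); applying \autoref{defn:largeetale} to $Z$ with normalization $\wt Z$, the image of $\pi_1^{\et}(\wt Z)\to\pi_1^{\et}(X_{\overline K})$ is infinite, and therefore $\pi_1^{\et}(X_{\overline K})$ is infinite.

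With this in hand, all the hypotheses of \autoref{lemma:largecover} are satisfied: $X$ is a smooth projective $K$-variety, $X_{\overline K}$ has infinite \'etale fundamental group, and $X(K)\neq\emptyset$. Hence, for any $d\geq 1$, that lemma produces a geometrically Galois cover $Y\to X$ defined over $K$ of degree $\geq d$, which is exactly the assertion, so we are done.

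I do not expect any genuine obstacle here: the argument is a one-line implication feeding into \autoref{lemma:largecover}, and the only point requiring even a modicum of care is the trivial ``large $\Rightarrow$ infinite'' step (together with the harmless low-dimensional bookkeeping just mentioned).
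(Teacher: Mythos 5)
Your proposal is correct and follows exactly the paper's route: the paper likewise treats this as an immediate consequence of \autoref{lemma:largecover} via the observation that a large \'etale fundamental group is in particular infinite (apply \autoref{defn:largeetale} to $X_{\overline K}$ itself, or to any positive-dimensional integral subvariety). The extra bookkeeping about $\dim X=0$ is harmless and not needed in the paper's intended scope.
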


Below, we describe examples of varieties with large \'etale fundamental group. 

\subsubsection*{Relation to topological fundamental group of complex analytification}
After fixing base points $x\in X(\mC)$, we have the group homomorphism
\[
\iota_X\colon \pi^{\Top}_1(X(\mC),x) \to \pi_1^{\et}(X,x)
\]
which identifies the \'etale fundamental group with the profinite completion of the topological one. 
Let $\wt{X}$ denote the topological cover of $X(\mC)$ corresponding to $\ker(\iota_X)$. 

\begin{prop}[\protect{\cite[Proposition 2.12.3]{Kollar:ShafarevichMaps},\cite[Proposition 1.3]{BrunebarbeMaculan:IntegralPointsLarge}}]\label{prop:complexlarge}
Suppose that $X$ is a proper $K$-variety. 
Then the \'etale fundamental group of $X$ is large if and only if the complex analytic space $\wt{X}$ does not contain positive-dimensional compact complex analytic subspaces. 
\end{prop}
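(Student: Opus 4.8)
The plan is to run both implications through the covering $p\colon\wt X\to X(\mC)$, whose group of deck transformations is $\Gamma:=\mathrm{im}(\iota_X)\subseteq\pi_1^{\et}(X)$; this covering is Galois because $\ker(\iota_X)$ is a normal subgroup. The first thing I would establish is a dictionary: for a positive-dimensional integral closed subvariety $Y\subseteq X_{\mC}$ with normalization $\nu\colon\wt Y\to Y$, the following three conditions are equivalent: $\mathrm{im}\bigl(\pi_1^{\et}(\wt Y)\to\pi_1^{\et}(X)\bigr)$ is finite; the group $\Gamma_Y:=\mathrm{im}\bigl(\pi_1^{\Top}(\wt Y(\mC))\to\Gamma\bigr)$ is finite; and every connected component of the pullback covering $\wt X\times_{X(\mC)}\wt Y(\mC)\to\wt Y(\mC)$ is compact. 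The first equivalence holds because $\pi_1^{\et}$ of a complex variety is the profinite completion of the topological $\pi_1$, so the image in $\pi_1^{\et}(X)$ is the topological closure of $\Gamma_Y$, and a subset of a profinite group has finite closure exactly when it is already finite. The second holds because, $p$ being Galois, every component of the pullback covering is a connected covering of the compact space $\wt Y(\mC)$ of degree $\#\Gamma_Y$.

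For the implication ``$\pi_1^{\et}(X)$ not large $\Rightarrow$ $\wt X$ contains a positive-dimensional compact analytic subspace'', I would take $Y$ witnessing the failure of largeness, so that a connected component $T_0$ of $\wt X\times_{X(\mC)}\wt Y(\mC)$ is a positive-dimensional compact complex space. The second projection $T_0\to\wt X$ is proper since $T_0$ is compact, so by Remmert's proper mapping theorem its image is a closed analytic subspace of $\wt X$; it is compact, and it is still positive-dimensional because its further image in $X(\mC)$ is $Y(\mC)$ and $p$ has finite (discrete) fibres.

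For the converse, I would begin with an irreducible positive-dimensional compact analytic subspace $Z\subseteq\wt X$. The fibres of $p|_Z$ lie in the discrete fibres of $p$ and $Z$ is compact, so $p|_Z$ is finite onto its image, which by Remmert's theorem and Chow's theorem (GAGA for proper schemes) equals $Y_0(\mC)$ for a positive-dimensional integral closed subvariety $Y_0\subseteq X_{\mC}$. The decisive move is then to cut down to a curve: choose an integral curve $C_0\subseteq Y_0$, let $\overline{C}_0$ be its normalization (a smooth projective curve), let $Z_1$ be an irreducible component of $(p|_Z)^{-1}(C_0(\mC))$ dominating $C_0(\mC)$, and let $\wt{Z_1}$ be its normalization, a compact Riemann surface. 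Then $\wt{Z_1}$ comes with a finite surjective map $h\colon\wt{Z_1}\to\overline{C}_0(\mC)$ and with the map $q\colon\wt{Z_1}\to Z_1\subseteq Z\subseteq\wt X$, and $q$ lifts along $p$ the composite $\wt{Z_1}\xrightarrow{h}\overline{C}_0(\mC)\to C_0(\mC)\hookrightarrow X(\mC)$. The lifting criterion (using normality of $\ker\iota_X$ to sidestep basepoints) gives $h_*\pi_1^{\Top}(\wt{Z_1})\subseteq\ker\bigl(\pi_1^{\Top}(\overline{C}_0(\mC))\to\Gamma\bigr)$. On the other hand, a non-constant holomorphic map of compact Riemann surfaces is surjective (open mapping theorem together with compactness), which forces $h_*\pi_1^{\Top}(\wt{Z_1})$ to have finite index in $\pi_1^{\Top}(\overline{C}_0(\mC))$; hence that kernel has finite index, $\Gamma_{C_0}$ is finite, and by the dictionary $\mathrm{im}\bigl(\pi_1^{\et}(\overline{C}_0)\to\pi_1^{\et}(X)\bigr)$ is finite, so $\pi_1^{\et}(X)$ is not large.

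The step I expect to be the main obstacle is precisely this passage to a curve in the converse direction: working directly with $Y_0=p(Z)$ would require the statement ``an infinite \'etale cover of a projective variety contains no full-dimensional compact analytic subspace'', which is not transparent in dimension $\ge2$, whereas over the curve $C_0$ it degenerates to the elementary fact that a non-constant holomorphic map of compact Riemann surfaces is surjective. Lesser points requiring care are the bookkeeping that matches normalizations with the pullback coverings, and invoking Chow's and Remmert's theorems for proper rather than projective $X$.
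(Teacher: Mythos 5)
The paper does not prove this proposition at all: it is quoted verbatim from K\'ollar and Brunebarbe--Maculan, so your argument is being compared against a citation rather than a proof. That said, your proof is correct and essentially reconstructs the standard argument from those references. The dictionary (finiteness of the \'etale image $\Leftrightarrow$ finiteness of $\Gamma_Y$ $\Leftrightarrow$ compactness of the components of the pulled-back cover) is right: the comparison $\pi_1^{\et}=\widehat{\pi_1^{\Top}}$ reduces the \'etale statement to the topological one, and a connected cover of the compact space $\wt Y(\mC)$ is compact iff its degree $\#\Gamma_Y$ is finite. The easy direction then follows from Remmert as you say, and in the converse your reduction to a curve is exactly the right move, turning the problem into the elementary fact that a lift of a nonconstant map of compact Riemann surfaces along an infinite cover cannot exist (open image plus compactness forces the intermediate cover corresponding to $h_*\pi_1^{\Top}(\wt{Z_1})$ to be compact, hence of finite degree). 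Two small points you should write out rather than assert: (i) the existence of a component $Z_1$ of $(p|_Z)^{-1}(C_0(\mC))$ dominating $C_0(\mC)$ needs the observation that this preimage is a compact analytic set with finitely many irreducible components, each of whose images under the proper map $p|_Z$ is an analytic subset of the irreducible curve $C_0(\mC)$, hence either finite or everything, and finitely many finite sets cannot cover $C_0(\mC)$; (ii) your witnessing curve $C_0$ lives in $X_{\mC}$, whereas the paper's \autoref{defn:largeetale} quantifies over subvarieties of $X$ itself (with $\pi_1^{\et}$ taken geometrically), so strictly speaking one should add the standard remark that largeness is insensitive to extension of the algebraically closed base field, via a spreading-out/specialization argument; the cited sources work over $\mC$ throughout, so this matches their convention. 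Neither point is a genuine gap.
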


\autoref{prop:complexlarge} gives us a useful criterion for determining when a proper variety has large \'etale fundamental group, and one particularly useful situation to consider is when $\wt{X}$ is the universal cover of $X(\mC)$ i.e., when $\iota_X$ is injective.  
We note that this holds when $\pi_1^{\Top}(X(\mC),x)$ is linear, and this result allows us to identify two classes of proper varieties with large \'etale fundamental group. 

\begin{example}\label{exam:largefun}
The \'etale fundamental group of $X$ is large when $X$ is an abelian variety and when $X$ is the quotient of a bounded symmetric domain in $\mC^n$ by a torsion-free co-compact lattice of its biholomorphism group. 
\end{example}

\begin{example}
Any variety which admits a finite morphism to a variety with large \'etale fundamental group will also have large \'etale fundamental group.
\end{example}

\begin{example}\label{exam:largelocal}
Another source of varieties with large \'etale fundamental groups comes from those which possess a large local system. 
Recall from \cite{Brunebarbe:IncreasingHyperbolicity}, a local system $\cL$ on $X(\mC)$ with coefficients in some field is \cdef{large} if given any non-constant morphism $f\colon Y\to X$ with $Y$ a normal irreducible complex variety, the local system $f^*\cL$ has infinite monodromy. 
Moreover, the \'etale fundamental group of a complex variety carrying a large local system is large.

It is well-known (see e.g., \cite[Proposition 4.3]{Brunebarbe:IncreasingHyperbolicity}) that complex algebraic varieties admitting a (graded-polarizable) variation of $\mZ$-mixed Hodge structure with finite period map have a large local system, and hence have large \'etale fundamental group. 
\end{example}

\section{\bf Proof of \autoref{thmx:main1A} and \autoref{thmx:main1N}}
\label{sec:proofs}
In this section, we prove our \autoref{thmx:main1A} and \autoref{thmx:main1N}. 
First, we prove several lemmas concerning the behavior of divisors under the action of a finite collection of automorphisms.  

%

\begin{lemma}\label{lem:ample}
Let $X$ be a projective $K$-variety, and fix some $d\in\mZ_{\geq 0}$. 
    There exists an ample line bundle $\sL$ on $X$ such that $H^1(X,\sL^n\otimes\sI_{S/X})=0$ for any finite subscheme $S\subset X$ of degree at most $d$ (i.e., for which $\dim_KH^0(S,\sO_S)\le d$) and any $n\geq 1$.
\end{lemma}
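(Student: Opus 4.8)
The plan is to find a single ample line bundle $\sL$ that is "uniformly $d$-jet ample" in a suitable sense, using Castelnuovo--Mumford regularity to kill the relevant $H^1$. First I would fix any ample line bundle $\sM$ on $X$. By Serre vanishing applied to the coherent sheaf $\sO_X$ on the projective scheme $X$, there exists $m_0$ such that $H^i(X,\sM^m)=0$ for all $i\ge 1$ and all $m\ge m_0$; more importantly, I want a bound that is uniform over all finite subschemes $S$ of degree $\le d$. The key observation is that a finite subscheme $S\subset X$ of degree $\le d$ has a bounded Hilbert polynomial — in fact its structure sheaf $\sO_S$ has length $\le d$ — so the family of all such $S$ is parametrized by a bounded piece of an appropriate Hilbert scheme (or, more elementarily, one can use that $\sI_{S/X}\supset \fm_x^d$ locally and that there are only finitely many Hilbert polynomials involved). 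By boundedness, Serre vanishing can be made uniform: there exists $m_0=m_0(X,\sM,d)$ such that for all $m\ge m_0$ and all finite $S\subset X$ with $\dim_K H^0(S,\sO_S)\le d$, one has $H^1(X,\sM^m\otimes\sI_{S/X})=0$.

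Next I would set $\sL:=\sM^{m_0}$ and check the statement for all $n\ge 1$. Twisting the exact sequence
\[
0\to \sI_{S/X}\to \sO_X\to \sO_S\to 0
\]
by $\sL^n=\sM^{nm_0}$ gives the exact sequence
\[
H^0(X,\sM^{nm_0})\to H^0(S,\sO_S)\to H^1(X,\sM^{nm_0}\otimes\sI_{S/X})\to H^1(X,\sM^{nm_0}).
\]
Since $nm_0\ge m_0$ for all $n\ge 1$, the uniform Serre vanishing gives $H^1(X,\sM^{nm_0}\otimes\sI_{S/X})=0$, which is exactly the claim with $\sL=\sM^{m_0}$.

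The main obstacle is making the vanishing genuinely uniform over the infinite family of all subschemes of degree $\le d$, rather than for one fixed $S$. I expect to handle this via boundedness: the set of closed subschemes of $X$ with length $\le d$ forms a bounded family (it lives in finitely many components of $\Hilb^P(X)$ for the finitely many constant Hilbert polynomials $P\le d$), and one can spread out over such a bounded parameter space and invoke semicontinuity / generic vanishing together with Noetherian induction, or alternatively quote the uniform version of Castelnuovo--Mumford regularity for bounded families (e.g. via a bounded-regularity statement for ideal sheaves of subschemes with bounded Hilbert polynomial). An even more hands-on route, if one prefers to avoid Hilbert schemes: every such $S$ is supported at $\le d$ points and is cut out locally by an ideal containing the $d$-th power of the maximal ideal, so $\sI_{S/X}\supseteq \sI$ for a subsheaf built from $d$-th power ideals at arbitrary points; bounding regularity for the "universal" such configuration (pulled back along $X^{(\le d)}$) again reduces to finitely much data by properness of the symmetric product. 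Either way, once the uniform $m_0$ is produced, the rest is the routine cohomology sequence above.
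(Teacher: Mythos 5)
Your proof is correct, and the endgame (twisting $0\to\sI_{S/X}\to\sO_X\to\sO_S\to 0$ by $\sL^n=\sM^{nm_0}$ and using $nm_0\ge m_0$) is the same as the paper's; the difference lies entirely in how you obtain the uniformity over all $S$ of degree $\le d$. The paper gets it completely explicitly: after fixing an embedding $X\hookrightarrow\mP^N$, it invokes Poonen's interpolation lemma (\cite[Lemma 2.1]{Poonen:BertiniFinite}), which says that for $m\ge d-1$ the restriction $H^0(\mP^N,\sO(m))\to H^0(S,\sO_S)$ is surjective for \emph{every} finite $S$ of degree $\le d$; combined with ordinary Serre vanishing of $H^1(X,\sO_X(m))$ for $m\ge C$, the ideal-sheaf sequence then kills $H^1(X,\sI_{S/X}(m))$ for all $m\ge\max(C,d-1)$, with no parameter spaces needed. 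You instead prove the uniform vanishing by boundedness: the subschemes of degree $\le d$ live in the finitely many projective Hilbert schemes $\Hilb^e(X)$, $e\le d$, the universal ideal sheaf is flat over the (Noetherian) parameter space, and relative Serre vanishing plus cohomology and base change (or semicontinuity plus Noetherian stratification) produce a single $m_0$. That argument is sound — including your observation that $\fm_x^d\subseteq\sI_{S,x}$ for a length-$\le d$ Artinian local quotient — but it is heavier machinery and yields a non-effective $m_0$, whereas the paper's route is elementary and gives the concrete bound $m\ge\max(C,d-1)$. Either approach proves the lemma.
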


\begin{proof}
    This is essentially \cite[Lemma 2.1]{Poonen:BertiniFinite}. 
    Let $X\hookrightarrow\mP^N$ be a closed embedding. For any $m\ge1$, there is a commutative diagram
    \[
    \begin{tikzcd}
        H^0(\mP^N,\sO(m))\ar[d]\ar[r, "(1)"]&H^0(S,\sO_S)\ar[d, equals]\\
        H^0(X,\sO_X(m))\ar[r, "(2)"]&H^0(S,\sO_S)\ar[r]&H^1(X,\sI_{S/X}(m))\ar[r]&H^1(X,\sO_X(m))
    \end{tikzcd}
    \]
    with bottom row exact.
    For $m\ge d-1\ge\dim_K H^0(S,\sO_S)-1$, \cite[Lemma 2.1]{Poonen:BertiniFinite} tells us that (1) above is surjective, and so (2) must be surjective as well. 
    By Serre vanishing, there exists a constant $C$ (depending only on the embedding $X\hookrightarrow\mP^N$) such that $H^1(X,\sO_X(m))=0$ if $m\ge C$. Combining this with surjectivity of (2) shows that
    \[H^1(X,\sI_{S/X}(m))=0\text{ for all }m\ge\max(C,d-1)=:M.\]
    In particular, if we take $\sL=\sO_X(M)$, then $H^1(X,\sL^n\otimes\sI_{S/X})=H^1(X,\sI_{S/X}(nM))=0$ for all $n\ge1$.
\end{proof}

\begin{lemma}\label{lem:reg}
    Let $(R,\mfm)$ be a noetherian regular local ring, and consider elements $f_1,\dots,f_k\in\mfm$. Then, $S:=R/(f_1,\dots,f_k)$ is a regular local ring of dimension $\dim R-k$ if and only if
    \[f_i\not\in\mfm^2+(f_1,\dots,f_{i-1})\]
    for all $i\ge1$.
\end{lemma}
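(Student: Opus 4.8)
The plan is to reduce the stated condition to the assertion that the images $\bar f_1,\dots,\bar f_k$ of the $f_i$ in the cotangent space $\mfm/\mfm^2$ are $\kappa$-linearly independent, where $\kappa:=R/\mfm$ is the residue field, and then to read off regularity of $S$ from the standard computation of the embedding dimension of a quotient.

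\emph{Step 1 (reformulation of the hypothesis).} I would first record two elementary facts about the quotient map $q\colon\mfm\to\mfm/\mfm^2$. For any ideal $J$ with $\mfm^2\subseteq J\subseteq\mfm$ and any $f\in\mfm$ one has $f\in J\iff q(f)\in q(J)$ (if $q(f)=q(g)$ with $g\in J$, then $f-g\in\mfm^2\subseteq J$). And, since $R\twoheadrightarrow\kappa$ and the $\kappa$-vector-space structure on $\mfm/\mfm^2$ is induced by multiplication in $R$, one has $q\big(\mfm^2+(f_1,\dots,f_{i-1})\big)=\operatorname{span}_\kappa(\bar f_1,\dots,\bar f_{i-1})$. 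Combining these, the hypothesis ``$f_i\notin\mfm^2+(f_1,\dots,f_{i-1})$ for all $i\ge1$'' is equivalent to ``$\bar f_i\notin\operatorname{span}_\kappa(\bar f_1,\dots,\bar f_{i-1})$ for all $i$'', which is in turn equivalent to $\bar f_1,\dots,\bar f_k$ being $\kappa$-linearly independent: one implication is immediate, and for the converse, given a nontrivial $\kappa$-linear relation among the $\bar f_i$, take the largest index $j$ with nonzero coefficient, solve for $\bar f_j$ in terms of $\bar f_1,\dots,\bar f_{j-1}$, and lift the coefficients to $R$ to exhibit $f_j\in\mfm^2+(f_1,\dots,f_{j-1})$.

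\emph{Step 2 (regularity criterion).} Set $n:=\dim R$, $I:=(f_1,\dots,f_k)$, and let $\mfm_S:=\mfm/I$ be the maximal ideal of $S$. The cotangent space $\mfm_S/\mfm_S^2$ equals $\mfm/(\mfm^2+I)$, so the natural surjection $\mfm/\mfm^2\twoheadrightarrow\mfm_S/\mfm_S^2$ has kernel $q(I)=\operatorname{span}_\kappa(\bar f_1,\dots,\bar f_k)$; using $\dim_\kappa\mfm/\mfm^2=n$ (regularity of $R$), this yields $\dim_\kappa\mfm_S/\mfm_S^2 = n-\dim_\kappa\operatorname{span}_\kappa(\bar f_1,\dots,\bar f_k)$. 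Now invoke the two standard inequalities for the Noetherian local ring $S$: always $\dim S\le\dim_\kappa\mfm_S/\mfm_S^2$, and $\dim S\ge\dim R-k=n-k$ because modding out by one element of the maximal ideal drops Krull dimension by at most one (the case $k>n$ is vacuous, as then neither side of the claimed equivalence can hold). If $\bar f_1,\dots,\bar f_k$ are $\kappa$-linearly independent, the formula gives $\dim_\kappa\mfm_S/\mfm_S^2=n-k$, whence $n-k\le\dim S\le n-k$ and $S$ is regular of dimension $n-k$. Conversely, if $S$ is regular of dimension $n-k$, then $\dim_\kappa\mfm_S/\mfm_S^2=\dim S=n-k$, so the formula forces $\dim_\kappa\operatorname{span}_\kappa(\bar f_1,\dots,\bar f_k)=k$, i.e.\ the $\bar f_i$ are independent. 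Combined with Step 1, this proves the lemma.

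The one point I would treat carefully is the bookkeeping in Step 1: verifying that membership in $\mfm^2+(f_1,\dots,f_{i-1})$ is genuinely detected in $\mfm/\mfm^2$ and that the ideal $(f_1,\dots,f_{i-1})$ contributes exactly the $\kappa$-span of $\bar f_1,\dots,\bar f_{i-1}$ and nothing more. After that, Step 2 is routine dimension theory of Noetherian local rings, and I foresee no serious obstacle.
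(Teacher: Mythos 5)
Your proof is correct: Step 1's reduction to linear independence of the images in $\mfm/\mfm^2$ and Step 2's squeeze $\dim R-k\le\dim S\le\dim_\kappa\mfm_S/\mfm_S^2=n-\dim_\kappa\operatorname{span}(\bar f_1,\dots,\bar f_k)$ are both sound, and the edge case $k>n$ is handled. The paper does not prove this lemma but simply cites Serre's \emph{Local Algebra} (Proposition 22, IV.D.2), whose standard proof is exactly the dimension count you give, so your argument matches the intended one.
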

\begin{proof}
This is essentially \cite[Proposition 22 in Section IV.D.2]{Serre:LocalAlgebra}. 
\end{proof}

\begin{lemma}\label{lem:linearly-dependent-codim}
	Let $V$ be an $n$-dimensional $K$-vector space. Implicitly identify $V$ with the affine scheme $\mA(V)=\Spec\operatorname{Sym}(V^\vee)$. For any $k\ge1$, consider the Zariski closed subset
	\[Z_k:=\left\{(v_1,\dots,v_k)\in V^k:v_1,\dots,v_k\text{ are $K$-linearly dependent}\right\}.\]
	If $k\le n$, then $\codim_{V^k}(Z_k)=(n+1)-k$. Otherwise, $Z_k=V^k$.
\end{lemma}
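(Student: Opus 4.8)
The plan is to compute the codimension of $Z_k$ by stratifying according to the rank of the tuple $(v_1, \dots, v_k)$, or more directly by exhibiting $Z_k$ as the image of an incidence variety of the expected dimension. The cleanest route I would take: for $k \le n$, realize $Z_k$ as the union over all hyperplanes' worth of linear-dependence conditions, but this is awkward; instead I would parametrize the locus of tuples spanning a subspace of dimension $\le k-1$. So consider the incidence correspondence
\[
\widetilde{Z}_k := \left\{ (W, v_1, \dots, v_k) : W \in \operatorname{Gr}(k-1, V),\ v_i \in W \text{ for all } i \right\},
\]
equipped with its projection $\pi\colon \widetilde{Z}_k \to V^k$. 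The image of $\pi$ is precisely $Z_k$ when $k \le n$ (a tuple is linearly dependent iff it lies in some $(k-1)$-dimensional subspace), and $\pi$ is generically injective onto its image (a linearly dependent tuple of rank exactly $k-1$ determines $W$ uniquely as its span).

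**Dimension count.** The Grassmannian $\operatorname{Gr}(k-1, V)$ has dimension $(k-1)(n-k+1)$, and the fiber of $\widetilde{Z}_k \to \operatorname{Gr}(k-1,V)$ over a point $W$ is $W^k \cong \mA^{k(k-1)}$. Hence
\[
\dim \widetilde{Z}_k = (k-1)(n-k+1) + k(k-1) = (k-1)(n+1).
\]
Since $\pi$ is generically one-to-one onto $Z_k$ (and $Z_k$ is irreducible, being the image of the irreducible $\widetilde{Z}_k$), we get $\dim Z_k = (k-1)(n+1)$, so
\[
\codim_{V^k}(Z_k) = kn - (k-1)(n+1) = kn - kn - k + n + 1 = (n+1) - k,
\]
as claimed. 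For $k > n$, any $k$ vectors in an $n$-dimensional space are automatically linearly dependent, so $Z_k = V^k$; this case is immediate and requires no argument.

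**The main obstacle, and how I would handle it.** The delicate point is justifying that $\pi$ is generically injective and dominant onto $Z_k$ with $Z_k$ closed and irreducible of the claimed dimension — i.e., that the image dimension equals the source dimension. I would argue as follows: $\widetilde{Z}_k$ is irreducible (it fibers over the irreducible Grassmannian with irreducible affine-space fibers), hence $Z_k = \overline{\pi(\widetilde{Z}_k)} = \pi(\widetilde{Z}_k)$ is irreducible and closed (the Grassmannian is proper, so $\pi$ is proper and its image is closed). Over the open dense locus $U \subset Z_k$ of tuples of rank exactly $k-1$, the map $\pi$ restricted to $\pi^{-1}(U)$ is a bijection onto $U$ — given such a tuple, $W$ must be its span — so $\dim \pi^{-1}(U) = \dim U = \dim Z_k$ (using that a dominant morphism of irreducible varieties that is generically finite preserves dimension). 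Since $\pi^{-1}(U)$ is dense in $\widetilde{Z}_k$, we conclude $\dim Z_k = \dim \widetilde{Z}_k = (k-1)(n+1)$. One should double-check the edge case $k=1$: then $Z_1 = \{0\}$, $\operatorname{Gr}(0,V)$ is a point, the formula gives $\codim = n$, consistent with $\dim Z_1 = 0$. Alternatively, and perhaps more in the spirit of a short lemma, one can skip the Grassmannian and instead note that $Z_k$ is cut out set-theoretically inside $V^k$ (with coordinates an $n \times k$ matrix) by the vanishing of all $k \times k$ minors when $k \le n$, i.e. it is the rank-$\le (k-1)$ determinantal variety of $n \times k$ matrices, whose codimension is the classical $(n - (k-1))(k - (k-1)) = n - k + 1$; I would cite the standard determinantal variety codimension formula to close this out quickly if an external reference is acceptable.
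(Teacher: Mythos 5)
Your proof is correct, and it takes a genuinely different route from the paper's. The paper stratifies $Z_k$ by the subsets $I\subsetneq\{1,\dots,k\}$ of indices forming a maximal linearly independent subtuple: each stratum $Y_I$ projects onto an open subset of $V^{\#I}$ with linear fibers of dimension $i(k-i)$, and maximizing $i(n+k-i)$ over $i<k$ gives the bound $\dim Y_I\le(k-1)(n+1)$, whence the codimension. You instead realize $Z_k$ (for $k\le n$) as the image of the incidence correspondence $\widetilde Z_k\subset\operatorname{Gr}(k-1,V)\times V^k$, a rank-$k(k-1)$ vector bundle over the Grassmannian, under a proper, generically injective projection; equivalently, you identify $Z_k$ with the rank-$\le(k-1)$ determinantal variety of $n\times k$ matrices and invoke the classical codimension formula $(n-(k-1))(k-(k-1))=n-k+1$. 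Your dimension count $(k-1)(n-k+1)+k(k-1)=(k-1)(n+1)$ matches the paper's top stratum $Y_{\{1,\dots,k-1\}}$, and your justification of generic injectivity (the span of a rank-$(k-1)$ tuple determines $W$, and such tuples are dense in the irreducible $Z_k$) is sound, as is the edge case $k=1$. The trade-off: the paper's argument is elementary and self-contained, using nothing beyond fiber-dimension counts over affine space; yours is shorter and more structural, and yields irreducibility of $Z_k$ as a byproduct (which the paper neither claims nor needs --- for the application in \autoref{prop:generalpositionintersection} only the lower bound $\codim Z_k\ge(n+1)-k$ is used). Either argument fully establishes the stated equality of codimensions.
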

\begin{proof}
	The claim is obvious when $k>n$, so assume $k\le n$. For any nonempty $I\subsetneq\{1,\dots,k\}$, let $p_I\colon V^k\to V^{\#I}$ denote the projection onto the coordinates contained in $I$. To each such $I$, with $\#I=:i$, we attach the subset
	\begin{align*}
		Y_I &:=\left\{\vec v=(v_1,\dots,v_k)\in V^k:p_I(\vec v)\not\in Z_i,\text{ but }p_{I\cup\{j\}}(\vec v)\in Z_{i+1}\text{ for all }j\not\in I\right\}
		.
	\end{align*}
	The image of $Y_I\xrightarrow{p_I}V^i$ is Zariski open in $V^i$, and the fibers of this map are all vector spaces of dimension $i(k-i)$. Thus, $\dim Y_I=\dim V^i+i(k-i)=i(n+k-i)$. One can easily check that this expression is increasing in $i$ while $i\le(n+k)/2$. Because we require $i<k$ in the formation of $Y_I$ (and because $k\le(n+k)/2$), we thus in fact conclude that
	\[\dim Y_I\le\dim Y_{\{1,\dots,k-1\}}=(k-1)(n+k-(k-1))=(k-1)(n+1).\]
	That is, each $Y_I$ is of codimension $\ge(n+1)-k$ in $V^k$. Unwinding definitions, one sees that
	\[Z_k=\{(0,\dots,0)\}\cup\left(\bigcup_{\emptyset\subsetneq I\subsetneq\{1,\dots,k\}}Y_I\right),\]
	from which the claim follows.
\end{proof}


\begin{prop}\label{prop:generalpositionintersection}
    Let $X$ be a smooth projective $\overline{K}$-variety, and let $A\subset\Aut(X)$ be a finite set of automorphisms of $X$ which acts freely on $X(\overline{K})$. Write $k:=\#A$ and $n:=\dim X$. 
    Then, there exists an ample line bundle $\sL$ and a dense open locus $U \subset |\sL|$ of its complete linear system such that for any effective divisor $D\in U$ the intersection $\bigcap_{\sigma\in A}\sigma(D)\subset X$ is regular of dimension $n-k$, where if $k > n$, then we mean that the intersection is empty. 
\end{prop}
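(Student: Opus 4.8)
The plan is to produce $\sL$ by applying Lemma~\ref{lem:ample} with a suitable bound on the degree $d$, and then to cut down the parameter space $|\sL|$ one automorphism at a time, controlling at each stage both the dimension of the partial intersection and its regularity via Lemma~\ref{lem:reg}. More precisely, fix $x \in X(\overline K)$ and consider the orbit $A \cdot x = \{\sigma(x) : \sigma \in A\}$, which has exactly $k$ points since $A$ acts freely. (Here I should be a bit careful: $A$ is merely a set of automorphisms, so ``acts freely'' should be read as $\sigma(x) \neq \tau(x)$ for distinct $\sigma, \tau \in A$ and all $x$ — this is what the statement means, and it is what the application in \autoref{corollary:alltogether} provides.) For $D \in |\sL|$, the point $x$ lies on $\bigcap_{\sigma \in A} \sigma(D)$ if and only if the section $s$ cutting out $D$ vanishes at every point of $A\cdot x$; and $x$ is a regular point of the expected dimension on this intersection precisely when, after choosing local coordinates at each $\sigma^{-1}(x)$, the pullbacks $\sigma^*s$ have linearly independent differentials there, by Lemma~\ref{lem:reg} applied with $R = \OO_{X,x}$ and $f_i$ the local equations of the $\sigma(D)$.

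The key device is to bound the ``bad locus'' in the incidence variety $\{(D, x) : x \in \bigcap_\sigma \sigma(D)\}$. First I would invoke Lemma~\ref{lem:ample} with $d = 2kn + k$ or so — large enough that for any length-$\le d$ subscheme $S$, $H^1(X, \sL \otimes \sI_{S/X}) = 0$, so that the restriction map $H^0(X,\sL) \to H^0(S, \OO_S)$ is surjective. Apply this with $S$ the order-$2$ thickening of the orbit $A\cdot x$ (length $k(n+1) \le d$): surjectivity onto $H^0(S,\OO_S)$ says exactly that the $(k(n+1))$-tuple of values $(\sigma^*s)(\sigma^{-1}x)$ together with first-order data $d(\sigma^*s)(\sigma^{-1}x)$ can be prescribed freely as $s$ ranges over $H^0(X,\sL)$. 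Thus, fixing $x$, the locus of $s$ for which $x \in \bigcap_\sigma \sigma(D)$ but the intersection fails to be regular of dimension $n-k$ at $x$ is, inside the affine space $H^0(X,\sL)$, the preimage under a surjective linear map of: (i) the locus where all $k$ values vanish — codimension $k$; intersected with (ii) the locus where the $k$ differentials (each a vector in an $n$-dimensional space) are linearly dependent — codimension $(n+1)-k$ by Lemma~\ref{lem:linearly-dependent-codim} when $k \le n$, and everything when $k > n$. (When $k>n$ one only needs that generically no point lies on all $k$ translates, which follows from (i) alone since $k > n = \dim X$.) So for each fixed $x$ this bad locus has codimension $\ge k + ((n+1)-k) = n+1$ in $H^0(X,\sL)$.

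Now run the standard incidence/dimension count: let $B \subset X \times |\sL|$ be the locally closed set of pairs $(x, D)$ with $x \in \bigcap_\sigma \sigma(D)$ and the intersection not regular of dimension $n-k$ at $x$. The fibers of $B \to X$ have codimension $\ge n+1$ in $|\sL|$ (projectivizing the affine estimate costs nothing), so $\dim B \le \dim X + \dim|\sL| - (n+1) = \dim|\sL| - 1$; hence the image of $B$ in $|\sL|$ is not dense, and its complement contains a dense open $U_0$. For $D \in U_0$, at every point $x \in \bigcap_\sigma \sigma(D)$ the local ring is regular of dimension $n-k$ — which (the intersection being closed in the smooth, hence excellent, $X$) forces $\bigcap_\sigma \sigma(D)$ to be regular and equidimensional of dimension $n-k$ (empty if $k>n$), as desired; take $U$ to be $U_0$ (or its interior). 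The main obstacle I anticipate is bookkeeping the two-step ``vanishing + dependent differentials'' codimension estimate correctly and feeding the right thickened subscheme into Lemma~\ref{lem:ample}; once the codimension is verified to be $\ge n+1$, the incidence argument is routine, and the translation between ``regular local ring of the right dimension at each point'' and ``globally regular of dimension $n-k$'' is a standard consequence of Lemma~\ref{lem:reg} together with regularity being an open condition on a Jacobson excellent scheme.
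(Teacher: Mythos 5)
Your proposal is correct and follows essentially the same route as the paper's proof: choose $\sL$ via \autoref{lem:ample} so that sections surject onto the first-order neighborhood of the $k$ distinct points $\sigma^{-1}(x)$, translate failure of regularity of dimension $n-k$ at $x$ into linear dependence of the images in $\mfm_x/\mfm_x^2$ via \autoref{lem:reg}, bound that locus by codimension $k+((n+1)-k)=n+1$ using \autoref{lem:linearly-dependent-codim}, and conclude with the incidence-variety dimension count. The only differences are cosmetic (a slightly larger choice of $d$, and splitting the codimension into ``vanishing'' plus ``dependent differentials,'' which is exactly the paper's observation that $(\mfm_x/\mfm_x^2)^k$ has codimension $k$ in $(\sO_{X,x}/\mfm_x^2)^k$).
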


\begin{proof}
	Let $d=k(n + 1) = \#A\cdot (\dim(X) + 1)$, and choose some ample $\sL$ on $X$ as in \autoref{lem:ample}. Let $|\sL|$ denote its complete linear system, so $D\in|\sL|$ denotes an effective divisor on $X$ such that $\sL \cong\sO(D)$. Given $D\in|\sL|$, we let $I(D):=\bigcap_{\sigma\in A}\sigma(D)\subset X$ denote the intersection of its translates. For any (closed) $p\in X$, we let $B_p\subset |\sL|$ be the locus of ``bad divisors'' $D$, i.e. those for which $p\in I(D)$, but $\sO_{I(D),p}$ is not regular local of dimension $n-k$.
    
    We claim that $B_p$ is of codimension $\ge n+1$ in $|\sL|$. For this, we will first set up some notation. Each $\sigma\in A$ induces an isomorphism
	\[    
    \sigma^*\colon \sO_{X,p}\cong\sO_{X,\sigma^{-1}(p)}
    \]
    of local rings. 
    Let $S'$ be $\bigcup_{\sigma\in A}\sigma^{-1}(p)$ endowed with its reduced closed subscheme structure; the points $\sigma\inv(p)$ are distinct as $\sigma\in A$ varies (for fixed $p$) by assumption.
	Let $\sI_{S'}$ denote the ideal sheaf of $S'$, and define $S$ to be the closed subscheme with ideal sheaf $\sI_{S'}^2$. 
	Note that $S$ and $S'$ have the same underlying set.

	We claim that $h^0(S,\sO_S)=k(n+1)=d$.  
	Indeed, $\sO_S$ is a skyscraper sheaf supported at $\sigma^{-1}(p)$ as $\sigma$ varies over $A$ with stalks $\sO_{S,q} = \sO_{X,q}/\fm_{q}^2$, where $q=\sigma\inv(p)$ and $\fm_{q}$ is the maximal ideal of $\sO_{X,q}$. 
	The $\overline K$-dimension of the local ring $\sO_{S,q}$ is
	\[
		\dim_{\overline K}\sO_{S,q} = \dim_{\overline K}(\sO_{X,q}/\fm_q)+\dim_{\overline K}(\fm_q/\fm_q^2) = \dim_{\overline K}(\overline K) + \dim\sO_{X,q} = 1+n,
	\]
	where $n=\dim X=\dim\sO_{X,q}=\dim_{\overline K}(\fm_q/\fm_q^2)$ since $q$ is a closed point on the smooth $\overline K$-scheme $X$.
	Since $A$ acts freely on $X(\overline{K})$ and $k = \#A$, we have that $h^0(S,\sO_S)=k(n+1)$.

	By our choice of $\sL$, the natural map
	\[    
	F\colon H^0(X,\sL)\to H^0(S,\sO_S)=\bigoplus_{\sigma\in A}\frac{\sO_{X,\sigma^{-1}(p)}}{\mfm_{\sigma^{-1}(p)}^2}\cong\bigoplus_{\sigma\in A}\frac{\sO_{X,p}}{\mfm_p^2}\cong\pwr{\frac{\sO_{X,p}}{\mfm_p^2}}^k
	\]
	 is surjective, where the second-to-last isomorphism is induced by the ${\sigma^*}^{-1}$s. 
	 By \autoref{lem:reg}, a nonzero section $s\in H^0(X,\sL)$ cuts out a divisor belonging to $B_p$ if and only if $F(s)=(f_1,\dots,f_k)\in\pwr{\frac{\sO_{X,p}}{\mfm_p^2}}^k$ satisfies $f_i\in\mfm_p/\mfm_p^2$ for all $i$, and $f_j\in\mfm_p^2+(f_1,\dots,f_{j-1})$ for some $j$. This is the case if and only if $f_1,\dots,f_k$ lie in
	 \[Z:=\left\{(f_1,\dots,f_k)\in\left(\frac{\mfm_p}{\mfm_p^2}\right)^k:f_1,\dots,f_k\text{ are $\overline K$-linearly dependent}\right\}.\]

	 By \autoref{lem:linearly-dependent-codim} applied to $V=\mfm_p/\mfm_p^2$, $Z$ is of codimension $\max\{(n+1)-k,0\}$ in $(\mfm_p/\mfm_p^2)^k$. 
	 Since $\mfm_p/\mfm_p^2$ is of codimension $1$ in $\sO_{X,p}/\mfm_p^2$, we see that $Z$ is of codimension $\max\{n+1,k\}\ge n+1$ in $(\sO_{X,p}/\mfm_p^2)^k\cong H^0(S,\sO_S)$. Now, because $F$ is a linear surjection, its corresponding map on affine schemes is faithfully flat (e.g. by \cite[Proposition 1.70]{Milne:AlgGrp}), so $F^{-1}(Z)$ is of codimension $\ge n+1$ in $H^0(X,\sL)$ as well. Hence, $B_p\subset|\sL|$ is of codimension $\ge n+1$ as claimed.

	    
	To conclude,  let $B\subset X\times |\sL|$ be (the closure of) the locus of pairs $(p,D)$ with $D\in B_p$, and consider the projections
    \[
    \begin{tikzcd}[column sep = 1cm, row sep = .7cm]
    X & B \arrow[swap]{l}{p_1} \arrow{r}{p_2} & \left|\sL\right|
    \end{tikzcd}
    \]
	Note that $p_1$ is surjective with fibers of dimension $\le\dim |\sL|-(n+1)$, so $\dim B\le\dim |\sL|-1$. Thus, $p_2$ is not surjective, so the locus $U=|\sL|\setminus p_2(B)$ consisting of divisors $D$ such that $I(D)$ is regular of dimension $n-k$ is both dense and open.
\end{proof}

\begin{corollary}\label{corollary:alltogether}
Let $X$ be a smooth projective $K$-variety, and let $A\subset\Aut(X_{\overline K})$ be a finite set of geometric automorphisms of $X$ which acts freely on $X(\overline K)$. 
Write $k:=\#A$ and $n:=\dim X \geq 2$. 
There exists an ample line bundle $\sL$ on $X$ and a dense open locus $U \subset |\sL|$ of its complete linear system such that any effective divisor $D\in U$ is ample, geometrically irreducible, and the intersection $\bigcap_{\sigma\in A}\sigma(D_{\overline K})\subset X_{\overline K}$ is regular of dimension $n-k$, where if $k > n$, then we mean that the intersection is empty. 
\end{corollary}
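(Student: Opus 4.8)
The plan is to deduce \autoref{corollary:alltogether} from \autoref{prop:generalpositionintersection} by descending the ample line bundle and dense open locus produced there from $X_{\overline K}$ to $X$, and then to shrink the locus further to enforce geometric irreducibility. Observe first that the hypotheses here are precisely those of \autoref{prop:generalpositionintersection} applied to the smooth projective $\overline K$-variety $X_{\overline K}$ with the same set $A$, and that ampleness of $D$ is automatic: any $D\in|\sL|$ satisfies $\sO_X(D)\cong\sL$, so $D$ is ample as soon as $\sL$ is. To produce a line bundle over $K$, I would apply \autoref{lem:ample} to $X$ itself (over $K$) with $d:=k(n+1)$. Inspecting the proof of \autoref{lem:ample}, $\sL$ may be taken to be $\sO_X(M)$ for a fixed closed embedding $X\hookrightarrow\mP^N_K$ and a suitable integer $M\geq1$ (so $\sL$ is very ample), and the very same $M$ then witnesses the conclusion of \autoref{lem:ample} for the base change $\sL_{\overline K}$ on $X_{\overline K}$, since both the surjectivity input \cite[Lemma 2.1]{Poonen:BertiniFinite} and Serre vanishing are preserved under the flat base change $\Spec\overline K\to\Spec K$. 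Thus $\sL_{\overline K}$ has every property of the line bundle used in the proof of \autoref{prop:generalpositionintersection}.

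Next I would run the construction in the proof of \autoref{prop:generalpositionintersection} for $X_{\overline K}$ and $\sL_{\overline K}$ while tracking fields of definition. Since $A$ is finite, there is a finite extension $K'/K$ over which every $\sigma\in A$ is defined, so $A\subset\Aut(X_{K'})$ and the whole construction of the ``bad locus'' may be carried out over $K'$: it yields a closed subset $B\subset X_{K'}\times_{K'}|\sL|_{K'}$ whose image $p_2(B)\subsetneq|\sL|_{K'}$ under the second projection is a proper closed subset, with the property that for every point $D$ of $|\sL|_{K'}\setminus p_2(B)$ the intersection $\bigcap_{\sigma\in A}\sigma(D_{\overline K})\subset X_{\overline K}$ is regular of dimension $n-k$ (with the stated convention when $k>n$). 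Because $K'/K$ is finite, the projection $\pi\colon|\sL|_{K'}\to|\sL|$ is finite, hence closed, so $\pi(p_2(B))\subset|\sL|$ is closed; it is proper since $\dim\pi(p_2(B))\leq\dim p_2(B)<\dim|\sL|_{K'}=\dim|\sL|$, using that $|\sL|$ is a geometrically irreducible projective space so that all irreducible components of $|\sL|_{K'}$ have dimension $\dim|\sL|$. Put $U_1:=|\sL|\setminus\pi(p_2(B))$, a dense open of $|\sL|$. A short diagram chase along the factorization $|\sL|_{\overline K}\to|\sL|_{K'}\to|\sL|$ shows that if $D\in U_1$, then $D_{\overline K}$ (formed via any chosen embedding of its residue field into $\overline K$) avoids $p_2(B)_{\overline K}$, and hence $\bigcap_{\sigma\in A}\sigma(D_{\overline K})$ is regular of dimension $n-k$.

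Finally, for geometric irreducibility I would use that $\sL$ is very ample and $n=\dim X\geq2$: by Bertini's irreducibility theorem the general member of the complete linear system $|\sL_{\overline K}|$ is irreducible, and since the locus over which a proper flat family of divisors has geometrically irreducible fibers is open, the subset $U_2\subset|\sL|$ of $D$ with $D_{\overline K}$ irreducible is a nonempty, hence dense, open subset. Taking $U:=U_1\cap U_2$ --- still dense open, as $|\sL|$ is irreducible --- then gives a locus every member of which is ample, geometrically irreducible, and has regular intersection of translates of dimension $n-k$, as required. I expect the only real difficulty to be bookkeeping rather than geometry: ensuring the ample bundle can be chosen over $K$ while still satisfying, after base change, the cohomological input to \autoref{prop:generalpositionintersection}, and checking that the ``bad locus'' over $K'$ has proper closed image in $|\sL|$ so that the descended locus is genuinely dense and open.
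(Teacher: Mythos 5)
Your proposal is correct and follows essentially the same route as the paper: apply \autoref{prop:generalpositionintersection} over $\overline K$ with the line bundle produced by \autoref{lem:ample} for $d=k(n+1)$ (noting its conclusion survives base change), descend the resulting dense open locus to $|\sL|$ over $K$, and then shrink it to force geometric irreducibility. The only differences are in implementation: the paper descends by observing that the good locus in $|\sL_{\overline K}|$ is $\Gal(\overline K/K)$-stable rather than by removing the image of the bad locus under the finite projection $|\sL|_{K'}\to|\sL|$, and it obtains geometric irreducibility from geometric connectedness of ample divisors (\cite[Corollary III.7.9]{Har}) combined with Bertini's smoothness theorem, rather than from Bertini irreducibility plus openness of the geometrically irreducible locus in the universal family --- both variants are sound.
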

\begin{proof}
Choose an ample line bundle $\sL$ on $X$ as in \autoref{lem:ample} with $d = k(n+1)$. 
Applying the argument from \autoref{prop:generalpositionintersection} to $X_{\overline{K}}$, we have that the locus 
$\overline{V} \subset |\sL_{\overline{K}}| \cong \mP_{\overline{K}}^{h^0(X,\sL) - 1}$ of divisors $D$ for which  $I(D):=\bigcap_{\sigma\in A}\sigma(D)\subset X_{\overline{K}}$ is regular of dimension $n - k$ is open and dense. 
By definition, the locus $\overline{V}$ is $\Gal(\overline{K}/K)$-invariant, and so it descends to an open dense locus ${V} \subset |\sL| \cong \mP_{{K}}^{h^0(X,\sL) - 1}$ over $K$. 
We note that \cite[Corollary III.7.9]{Har} tells us that any $D\in V$ is geometrically connected, 
and Bertini's theorem \cite[Cor.I.6.11(2)]{Jouanolou:Bertini} implies that there is a dense open locus $U$ in $V$ whose corresponding divisors are smooth. Divisors in $U$ are smooth and geometrically connected; hence, they are geometrically irreducible and $U\subset V$ is the dense open locus sought after. 
\end{proof}


We now prove our main theorems, restated below for the reader's convenience.

\begin{theorem}[= \autoref{thmx:main1A}]\label{thm:1Ain}
Let $X$ be a smooth projective $F$-variety of dimension $n\geq 2$. 
Suppose that $\varphi\colon Y \to X$ is a geometrically Galois cover of degree at least $2n^2$. 
Then, there exists an ample line bundle $\sL$ on $Y$ such that for a general member $D$ of the complete linear system $|\sL|$,  $D$ is geometrically irreducible, $\varphi(D)$ is ample, and $X\setminus \varphi(D)$ is arithmetically hyperbolic; in particular, any set of $\varphi(D)$-integral points on $X$ is finite. 
\end{theorem}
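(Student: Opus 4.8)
The plan is to deduce \autoref{thm:1Ain} from the purely geometric statement \autoref{corollary:alltogether}, together with the finiteness theorems of Levin \cite{Levin} and Heier--Levin \cite{HeierLevin:Degeneracy}, by descending integral points along the cover $\varphi$ via the Chevalley--Weil theorem.

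\emph{Step 1: an auxiliary divisor on $Y$.} As $\varphi\colon Y\to X$ is finite \'etale and $X$ is smooth, $Y$ is itself a smooth projective $F$-variety of dimension $n$; let $G:=\Aut(Y_{\overline{F}}/X_{\overline{F}})$, a group of order $\deg\varphi\geq 2n^2$ that acts freely on $Y(\overline{F})$. I would run the argument of \autoref{prop:generalpositionintersection} (equivalently, \autoref{corollary:alltogether}) applied to $Y$ not for $G$ itself but for \emph{every} $(n+1)$-element subset $A'\subseteq G$ at once: a single ample $\sL$ on $Y$ produced by \autoref{lem:ample} with $d=(n+1)^2$ serves for all such $A'$ simultaneously, and intersecting the finitely many resulting dense open loci --- and shrinking once more so that $\sigma(D)\neq D$ for every $\sigma\in G\smallsetminus\{1\}$, which is a dense open condition (or one with empty complement) --- yields a dense open $U\subseteq|\sL|$ such that every $D\in U$ is ample, geometrically irreducible, and satisfies $\bigcap_{\sigma\in A'}\sigma(D_{\overline{F}})=\emptyset$ for all $A'$ of size $n+1$. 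Equivalently, each geometric point of $Y$ lies on at most $n$ of the divisors $\{\sigma(D_{\overline{F}})\}_{\sigma\in G}$, which are pairwise distinct, irreducible, and ample. Since $F$ is infinite, $U$ has an $F$-rational point, so we may fix such a geometrically irreducible $D$ defined over $F$.

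\emph{Step 2: descent to $X$.} Put $N:=\varphi(D)$, an effective divisor on $X$ defined over $F$. Because $\varphi$ is Galois and \'etale and the $\sigma(D)$ are distinct, $(\varphi^*N)_{\overline{F}}=\sum_{\sigma\in G}\sigma(D_{\overline{F}})$; this is a positive combination of ample divisors, hence ample, so $N$ is ample since ampleness descends and $\varphi$ is finite and surjective. For arithmetic hyperbolicity, fix a finite extension $F'/F$, a finite set $S$ of places of $F'$ containing the Archimedean ones, and a set $R$ of $(N,S)$-integral points on $X$ over $F'$. Spreading $\varphi$ out to a finite \'etale morphism of schemes over a suitable ring $\sO_{F,S_0}$ of $S_0$-integers and applying Chevalley--Weil, there is a finite extension $F''/F'$ and a finite set $S''$ of places of $F''$ so that every $P\in R$ lifts to some $Q\in Y(F'')$; by functoriality of Weil functions $\lambda_{\varphi^*N,\,w}(Q)=\lambda_{N,\,w|_{F'}}(P)\leq 0$ for all $w\notin S''$, so a choice of one lift per point of $R$ is a set of $(\varphi^*N,S'')$-integral points on $Y$ over $F''$. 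Now $(\varphi^*N)_{\overline{F}}$ is a sum of $\#G\geq 2n^2$ pairwise distinct, irreducible, ample divisors on the $n$-dimensional $Y$ in general position, so the finiteness criterion of \cite{Levin,HeierLevin:Degeneracy} forces this set of lifts to be finite; since $\varphi$ is finite, $R$ is finite. As $F'$ and $S$ were arbitrary, $X\smallsetminus N$ is arithmetically hyperbolic, and in particular every set of $\varphi(D)$-integral points on $X$ is finite.

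The main obstacle I anticipate is arranging the hypotheses of the Levin/Heier--Levin finiteness theorem with the precise constant $2n^2$: one needs the translates $\sigma(D)$ to be in \emph{general position} (at most $n$ through any point), not merely pairwise distinct, which is exactly why the geometric input must be invoked for all $(n+1)$-element subsets of $G$ rather than for $G$ as a whole, and the degree bound $\deg\varphi\geq 2n^2$ must be matched to what \cite{Levin,HeierLevin:Degeneracy} demand for \emph{finiteness} (as opposed to mere non-density) of integral points on the complement of a collection of ample divisors in general position. A secondary, more routine matter is the bookkeeping in the Chevalley--Weil step --- keeping track of the extension $F''$ and the enlarged place set $S''$, and checking the compatibility $\lambda_{\varphi^*N}=\lambda_N\circ\varphi$ of Weil functions under pullback.
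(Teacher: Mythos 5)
Your proposal is correct and follows essentially the same route as the paper: choose $D$ via \autoref{corollary:alltogether} so that the Galois translates $\sigma(D_{\overline F})$ are ample, geometrically irreducible, and in general position, apply the Heier--Levin finiteness theorem to $Y\setminus\varphi^*\varphi(D)$, and descend by integral Chevalley--Weil, with ampleness of $\varphi(D)$ coming from finite surjectivity of $\varphi$. The only differences are cosmetic: you make explicit the application of the general-position result to every $(n+1)$-element subset of $G$ (which the paper does implicitly), and you perform a single Chevalley--Weil descent along $\varphi\colon Y\to X$ over $F$, whereas the paper first base changes to a field $F'$ where the automorphisms are defined and then descends in two steps through $X_{F'}$.
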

\begin{proof}
Let $G\subset\Aut(Y_{\overline F})$ denote the Galois group of $\varphi$.  
By \autoref{corollary:alltogether}, we can find an ample line bundle $\sL$ on $Y$ such that a general member $D$ of the complete linear system $|\sL|$ is ample, geometrically irreducible, and the intersection (in $Y_{\overline F}$) of any $\dim(Y)+1$ of the divisors $\{ \sigma(D_{\overline F}) : \sigma \in G\}$ is empty. Let $F'/F$ be a finite extension over which all $\sigma\in G$ are defined, and let $\varphi_{F'}\colon Y_{F'}\to X_{F'}$ be the base change of $\varphi$.
Since $D$ is ample, $\sigma(D_{F'})$ is as well for all $\sigma \in G$. 
Since $\varphi^{*}_{F'}(\varphi_{F'}(D_{F'})) = \sum_{\sigma\in G}\sigma(D_{F'})$ and $\#G\ge2n^2 = 2\dim(Y_{F'})^2$,  \cite[Theorem 1.4]{HeierLevin:Degeneracy} asserts $Y_{F'}\setminus \varphi^{*}_{F'}(\varphi_{F'}(D_{F'}))$ is arithmetically hyperbolic.
Since $\varphi_{F'}$ is finite \'etale, the integral Chevalley--Weil theorem \cite[$\S\, 5.1$]{Vojta87} implies that $X_{F'}\setminus \varphi_{F'}(D_{F'})$ is arithmetically hyperbolic. 
To finish, note that the natural projection $f\colon X_{F'}\to X$ is finite \'etale and that $\varphi(D)=f(\varphi_{F'}(D_{F'}))$, so a second application of integral Chevalley--Weil, this time to $f\colon X_{F'}\to X$, shows that $X\setminus \varphi(D)$ is arithmetically hyperbolic.
Finally, ampleness of $\varphi(D)$ follows from \cite[Exercise III.5.7(d)]{Har}, as $(f\circ\varphi_{F'})^*(\varphi(D))=\varphi^*(\varphi(D))_{F'}$ is ample.
\end{proof}

\begin{corollary}[= \autoref{corox:main1A}]\label{coro:1Ain}
Let $X$ be a smooth projective $F$-variety of dimension $n\geq 2$ with infinite \'etale fundamental group. Then, there exist infinitely many irreducible, ample divisors $D$ on $X$ such that $X\setminus D$ is arithmetically hyperbolic; in particular, such that any set of $D$-integral points is finite. If moreover $X(F)\neq\emptyset$, then there are infinitely many such $D$ which are geometrically irreducible.
\end{corollary}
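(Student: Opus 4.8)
The plan is to deduce the corollary from \autoref{thmx:main1A} by producing geometrically Galois covers of degree $\ge 2n^2$ and transporting the divisors the theorem produces. I would split into two regimes according to whether $X$ has an $F$-rational point; the case $X(F)=\emptyset$ is exactly where the extra hypothesis in the last sentence of the corollary enters.

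Suppose first $X(F)\neq\emptyset$. Then \autoref{lemma:largecover} (with $d=2n^2$) gives a geometrically Galois cover $\varphi\colon Y\to X$, defined over $F$, of degree $\ge 2n^2$. Plugging this into \autoref{thmx:main1A} yields an ample line bundle $\sL$ on $Y$ (the one coming from \autoref{lem:ample}, hence a high power of a very ample bundle, so that $\dim|\sL|\ge 1$) and a dense open $U\subseteq|\sL|$ such that for every $D\in U$ the divisor $\varphi(D)$ on $X$ is ample, has arithmetically hyperbolic complement, and is geometrically irreducible (being the image of the geometrically irreducible $D$). To pass from one such $D$ to infinitely many, I would let $D$ range over $U$ and track $\supp\varphi(D)$: since $\varphi$ is Galois with group $G$ and $D$ is irreducible, any $D_1,D_2\in U$ with $\supp\varphi(D_1)=\supp\varphi(D_2)$ have $D_1$ and $D_2$ among the irreducible components of $\varphi^{-1}(\varphi(D_1))$, hence $D_2=\sigma(D_1)$ for some $\sigma\in G$; thus the fibres of $D\mapsto\supp\varphi(D)$ have size $\le\#G$. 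As $|\sL|$ is infinite (positive-dimensional over the infinite field $F$), the image is infinite, giving infinitely many geometrically irreducible ample divisors on $X$ with arithmetically hyperbolic complement, which is the second assertion.

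For general $X$ I would argue by spreading out. As $\pi_1^{\et}(X_{\overline F})$ is infinite it admits open subgroups of arbitrarily large finite index, so $X_{\overline F}$ has a connected finite étale cover of degree $\ge 2n^2$, which after passing to its Galois closure we may take Galois over $\overline F$. This finite étale morphism of $\overline F$-schemes descends, over some finite extension $F'/F$, to a finite étale $\varphi'\colon Y'\to X_{F'}$ with $Y'$ geometrically connected — that is, to a geometrically Galois cover of $X_{F'}$ of degree $\ge 2n^2$, needing no rational point. Applying \autoref{thmx:main1A} over the number field $F'$ produces (as above) infinitely many geometrically irreducible ample divisors $E$ on $X_{F'}$ with $X_{F'}\setminus E$ arithmetically hyperbolic. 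Since $g\colon X_{F'}\to X$ is finite étale, the integral Chevalley--Weil theorem — exactly as in the last step of the proof of \autoref{thmx:main1A} — shows $X\setminus g(E)$ is arithmetically hyperbolic; moreover $g(E)$ is irreducible (image of an irreducible set under a finite morphism) and ample, since $g^*g(E)$ is a sum of translates of the ample divisor $E$ and $g$ is finite, so that ampleness descends by \cite[Exercise III.5.7(d)]{Har}. Letting $E$ vary gives infinitely many such $g(E)$, proving the first assertion.

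The main obstacle is not any single deep input — all the substantive content sits in \autoref{thmx:main1A} — but the bookkeeping around rational points and finite extensions: verifying that when $X(F)=\emptyset$ one genuinely can only descend the \emph{divisor} (via Chevalley--Weil), not the cover, so that geometric irreducibility is lost; and checking that irreducibility, ampleness, and the "infinitely many" count all survive pushforward along $g$ and variation within a fixed linear system.
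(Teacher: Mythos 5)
Your proposal is correct and follows essentially the same route as the paper: apply \autoref{thmx:main1A} over a finite extension $F'/F$ where a (geometrically) Galois cover of degree $\ge 2n^2$ exists, push the resulting divisors down along the finite \'etale projection $X_{F'}\to X$ using the integral Chevalley--Weil theorem and \cite[Exercise III.5.7(d)]{Har}, count the images via finiteness of $Y\to X$, and invoke \autoref{lemma:largecover} to take the cover over $F$ itself when $X(F)\neq\emptyset$ so as to retain geometric irreducibility. Your explicit bound $\le\#G$ on the fibres of $D\mapsto\supp\varphi(D)$ is just a more detailed version of the paper's remark that $f\circ\varphi$ is finite.
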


\begin{proof}

Since $X$ has infinite \'etale fundamental group, there exists a Galois cover $\varphi\colon Y\to X_{F'}$ of degree at least $2n^2$, defined over some finite extension $F'/F$. \autoref{thm:1Ain} guarantees the existence of infinitely many geometrically irreducible, ample divisors $D'\subset Y$ such that $\varphi(D')\subset X_{F'}$ is geometrically irreducible and ample, and such that $X_{F'}\setminus\varphi(D')$ is arithmetically hyperbolic. Letting $f:X_{F'}\to X$ denote the natural projection, the integral Chevalley-Weil theorem \cite[$\S\, 5.1$]{Vojta87} tells us that, for any such $D'$, $X\setminus f(\varphi(D'))$ is arithmetically hyperbolic as well. As in the proof of \autoref{thm:1Ain}, each $f(\varphi(D'))$ is ample as a consequence of \cite[Exercise III.5.7(d)]{Har}. Because $f\circ\varphi\colon Y\to X$ is finite, the infinitely many (geometrically) irreducible, ample $D'$ on $Y$ give rise to infinitely many irreducible, ample divisors $f(\varphi(D'))$ on $X$ whose complements are arithmetically hyperbolic.  Finally, if $X(F)\neq\emptyset$, \autoref{lemma:largecover} says we can take $\varphi$ to be defined over $F$ (at the expense of making it geometrically Galois, but not necessarily Galois), and so we directly get geometrically irreducible divisors, namely $\varphi(D')$, defined over $F$ in the above application of \autoref{thm:1Ain}.
\end{proof}

\begin{theorem}[= \autoref{thmx:main1N}]\label{thm:1Nin}
Let $X$ be a complex smooth projective variety of dimension $n\geq 2$. 
Suppose that $\varphi\colon Y \to X$ is a Galois cover of degree at least $2n^2$. 
Then, there exists an ample line bundle $\sL$ on $Y$ such that for a general member $D$ of the complete linear system $|\sL|$, $D$ is irreducible, $\varphi(D)$ is ample, and any holomorphic morphism $\mC \to X\setminus \varphi(D)$ is constant. 
\end{theorem}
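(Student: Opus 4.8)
The plan is to mirror the proof of \autoref{thm:1Ain} almost verbatim, substituting the Nevanlinna-theoretic inputs for the arithmetic ones and noting that the Chevalley--Weil reduction is now unnecessary since $\varphi$ is a genuine Galois cover over $\CC$ (so all elements of the Galois group are already defined over the base field). Write $G\subset\Aut(Y)$ for the Galois group of $\varphi$, and set $k:=\#G\ge 2n^2$. Since $G$ is the deck transformation group of an \'etale cover, it acts freely on $Y(\CC)$, and moreover $\dim Y=\dim X=n$. Thus $G\subset\Aut(Y)$ is precisely the kind of finite free automorphism set to which \autoref{corollary:alltogether} applies (with $K=\CC$).

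First I would invoke \autoref{corollary:alltogether} to produce an ample line bundle $\sL$ on $Y$ and a dense open $U\subset|\sL|$ such that every $D\in U$ is ample and irreducible (over $\CC$, geometric irreducibility is just irreducibility), and such that the intersection of any $n+1$ of the translates $\{\sigma(D):\sigma\in G\}$ is empty — equivalently, every point of $Y(\CC)$ lies on at most $n$ of these translates. Next I would observe that $\varphi^*(\varphi(D))=\sum_{\sigma\in G}\sigma(D)$ as divisors on $Y$ (this is the standard computation for a Galois cover: the preimage of $\varphi(D)$ is the union of the $G$-orbit of $D$, and since $\varphi$ is \'etale there is no ramification contribution, so each appears with multiplicity one). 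Each $\sigma(D)$ is ample because $D$ is and $\sigma$ is an automorphism, the number of summands is $\#G\ge 2n^2=2\dim(Y)^2$, and no point of $Y$ lies on more than $n=\dim Y$ of them. This is exactly the hypothesis of the holomorphic (Nevanlinna) analogue of Heier--Levin, \cite[Theorem 1.3]{HeierLevin:Degeneracy} (the complex-analytic counterpart of the \cite[Theorem 1.4]{HeierLevin:Degeneracy} used above), which yields that every holomorphic map $\CC\to Y\setminus\varphi^*(\varphi(D))$ is constant.

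Finally I would transfer this conclusion from $Y$ down to $X$ using that $\varphi$ is finite \'etale. Given a holomorphic map $g\colon\CC\to X\setminus\varphi(D)$, its image lands in $X\setminus\varphi(D)$, over which $\varphi$ restricts to a finite \'etale (hence finite covering-space) map $Y\setminus\varphi^{-1}(\varphi(D))\to X\setminus\varphi(D)$; since $\CC$ is simply connected, $g$ lifts to a holomorphic map $\tilde g\colon\CC\to Y\setminus\varphi^{-1}(\varphi(D))\subset Y\setminus\varphi^*(\varphi(D))$ with $\varphi\circ\tilde g=g$. (Here I use $\varphi^{-1}(\varphi(D))=\bigcup_{\sigma\in G}\sigma(D)$ has the same support as the divisor $\varphi^*(\varphi(D))$.) By the previous paragraph $\tilde g$ is constant, hence $g=\varphi\circ\tilde g$ is constant. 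Ampleness of $\varphi(D)$ follows as in \autoref{thm:1Ain}: $\varphi^*(\varphi(D))=\sum_{\sigma\in G}\sigma(D)$ is ample, and ampleness descends along the finite surjective morphism $\varphi$ by \cite[Exercise III.5.7(d)]{Har}. I do not expect a serious obstacle here; the only points requiring care are invoking the correct (holomorphic) version of the Heier--Levin degeneracy statement and justifying the lifting of a map from $\CC$ along a finite covering space, both of which are standard.
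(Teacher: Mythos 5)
Your proposal is correct and follows essentially the same route as the paper: the paper's proof simply says to repeat the argument of \autoref{thm:1Ain} with the Heier--Levin theorem replaced by its Nevanlinna analogue and to use that constancy of holomorphic maps $\mC\to Y\setminus\varphi^*(\varphi(D))$ is equivalent to constancy of holomorphic maps $\mC\to X\setminus\varphi(D)$ since $\varphi$ is finite \'etale --- which is exactly the simply-connectedness lifting argument you spell out. The only cosmetic difference is the reference for the Nevanlinna-theoretic input (the paper cites \cite[Theorem 9.11B.(a)]{Levin} rather than a theorem of Heier--Levin), and your observation that the Chevalley--Weil descent steps become unnecessary over $\CC$ matches the paper's setup.
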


\begin{proof}
The proof of this statement follows identically from the proof of \autoref{thmx:main1A} except one needs to replace \cite[Theorem 1.4]{HeierLevin:Degeneracy} with its Nevanlinnan analogue (cf.~\cite[Theorem 9.11B.(a)]{Levin}), and note that any holomorphic morphism $\mC \to Y\setminus \varphi^{*}(\varphi(D))$ is constant if and only if any holomorphic morphism $\mC \to X\setminus \varphi(D)$ is constant since $\varphi$ is finite \'etale. 
\end{proof}

\begin{corollary}[= \autoref{corox:main1N}]
Let $X$ be a complex smooth projective variety of dimension $n\geq 2$ with infinite fundamental group. 
Then, there exist infinitely many irreducible, ample divisors $D$ on $X$ such that any holomorphic morphism $\mC \to X\setminus D$ is constant.  
\end{corollary}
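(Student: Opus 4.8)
The plan is to imitate the proof of \autoref{coro:1Ain}, with \autoref{thm:1Nin} playing the role of \autoref{thm:1Ain}. Over $\mC$ the argument is in fact shorter than in the arithmetic case, since every finite cover and every automorphism is automatically defined over $\mC$, so no analogue of the integral Chevalley--Weil descent is needed. The only ingredient not already supplied by \autoref{thm:1Nin} is an analytic counterpart of \autoref{lemma:largecover}: the existence, for $X$ with infinite fundamental group, of a connected finite Galois cover $\varphi\colon Y\to X$ of degree $\geq 2n^2$.

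I would produce such a cover as follows. Since $X$ is projective, $X(\mC)$ is a compact complex manifold, so $\pi_1^{\Top}(X(\mC))$ is finitely generated, and its finite-index subgroups correspond, index-preservingly, to the open subgroups of its profinite completion, which by the map $\iota_X$ is identified with $\pi_1^{\et}(X)$. As this group is infinite, it admits open subgroups of arbitrarily large finite index; hence $\pi_1^{\Top}(X(\mC))$ has a subgroup of index $\geq 2n^2$, and replacing it by its normal core we get a finite-index \emph{normal} subgroup $N\leq\pi_1^{\Top}(X(\mC))$ with $[\pi_1^{\Top}(X(\mC)):N]\geq 2n^2$. The associated connected finite topological covering space of $X(\mC)$ is, by the Riemann existence theorem, the analytification of a connected finite \'etale cover $\varphi\colon Y\to X$ with $Y$ a smooth projective $\mC$-variety; since $N$ is normal, $\varphi$ is a Galois cover in the sense of Section~\ref{sububsec:Galois}, with Galois group $G=\pi_1^{\Top}(X(\mC))/N$ of order $\geq 2n^2=2\dim(Y)^2$.

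Next I would apply \autoref{thm:1Nin} to $\varphi\colon Y\to X$: it yields an ample line bundle $\sL$ on $Y$ such that a general member $D\in|\sL|$ is irreducible, $\varphi(D)$ is ample, and every holomorphic map $\mC\to X\setminus\varphi(D)$ is constant. Since $|\sL|$ is positive-dimensional, its general-member locus contains infinitely many such $D$ (one may also simply vary over the powers $\sL^{\otimes m}$, $m\geq1$, which all satisfy the conclusion because the proof of \autoref{thm:1Nin} builds $\sL$ via \autoref{corollary:alltogether} and \autoref{lem:ample}). Because $\varphi$ is finite, $\varphi(D)=\varphi(D')$ forces the irreducible divisor $D'$ to be a $G$-translate of $D$, so $D\mapsto\varphi(D)$ has finite fibers, and we obtain infinitely many distinct divisors $\varphi(D)$ on $X$, each irreducible (being the image of an irreducible variety under a finite morphism) and ample, with $\mC\to X\setminus\varphi(D)$ constant. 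These are the desired divisors.

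I expect no serious obstacle here: the statement is essentially immediate once \autoref{thm:1Nin} is granted, and the only point requiring care is the first step. The mild subtlety there is purely group-theoretic and is exactly the one already implicit in \autoref{coro:1Ain}: producing covers of unbounded degree uses that $\pi_1^{\et}(X)$ — the profinite completion of $\pi_1^{\Top}(X(\mC))$ — is infinite. This holds automatically whenever $\pi_1^{\Top}(X(\mC))$ is residually finite, in particular for all the examples of interest (abelian varieties, quotients of bounded symmetric domains, varieties carrying a large local system), so in practice "infinite fundamental group" should be read with this in mind, exactly parallel to the hypothesis on $\pi_1^{\et}$ in \autoref{coro:1Ain}.
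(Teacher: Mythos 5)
Your proposal is correct and takes essentially the same route as the paper, whose proof is literally ``follow the proof of \autoref{coro:1Ain} with \autoref{thm:1Nin} in place of \autoref{thm:1Ain}''; your expanded version (finite topological covers plus the Riemann existence theorem standing in for \autoref{lemma:largecover}, and finiteness of $\varphi$ to convert infinitely many $D$ into infinitely many distinct $\varphi(D)$) simply fills in the steps the paper leaves implicit. The caveat you raise at the end --- that producing covers of unbounded degree really uses infinitude of the profinite completion of $\pi_1^{\Top}(X(\mC))$ rather than of $\pi_1^{\Top}(X(\mC))$ itself --- is a genuine subtlety, but it is equally present in the paper's own statement and proof, so you are not introducing any new gap.
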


\begin{proof}
The proof follows that of \autoref{coro:1Ain} except that \autoref{thm:1Ain} is replaced with \autoref{thm:1Nin}. 
\end{proof}

\begin{remark}
While \autoref{thmx:main1N} and \autoref{corox:main1N} contain the assumption that $X$ has dimension $n\geq 2$, the conclusions of these statements hold when $X$ has dimension $1$ by Picard's theorem.

On the other hand, most but not all of the conclusions of \autoref{thmx:main1A} and \autoref{corox:main1A} hold when $X$ has dimension $1$. 
When $X$ has dimension 1 and infinite \'etale fundamental group,  Siegel's theorem tells us that there exist infinitely many irreducible, ample divisors $D$ on $X$ such that $X\setminus D$ is arithmetically hyperbolic. 
The divisors $D$ will be \textit{geometrically} irreducible when they correspond to a $F$-rational point of $X$. If $X(F)\neq \emptyset$,  there does exist a {geometrically} irreducible, ample divisor $D$ on $X$ such that $X\setminus D$ is arithmetically hyperbolic.  However, the Mordell--Weil theorem and Faltings' theorem tell us that we will have infinitely many $F$-rational points (hence geometrically irreducible divisors) only when $X$ is an elliptic curve of positive Mordell--Weil rank over $F$. 
\end{remark}

\begin{remark}\label{rem:phi(D)-not-normal}
The irreducible divisors $\varphi(D)$ we have constructed in the proof of \autoref{thmx:main1A} cannot be normal when $\dim(X) \geq 2$. 
Suppose that $\varphi(D)$ is normal.  Since $\varphi$ is finite \'etale, $\varphi^*(\varphi(D))$ is normal and ample, and hence connected by \cite[Corollary III.7.9]{Har}.  Since $\varphi^*(\varphi(D))$ is normal and connected, it is irreducible, contradicting its construction from a union of many distinct effective divisors.
\end{remark}

  \bibliography{refs}{}

\def\cprime{$'$}
\providecommand{\bysame}{\leavevmode\hbox to3em{\hrulefill}\thinspace}
\providecommand{\MR}{\relax\ifhmode\unskip\space\fi MR }
\providecommand{\MRhref}[2]{%
  \href{http://www.ams.org/mathscinet-getitem?mr=#1}{#2}
}
\providecommand{\href}[2]{#2}
\begin{thebibliography}{{Sta}15}

\bibitem[Aut09]{Autissier:GeometryPointsEntire}
Pascal Autissier, \emph{G\'{e}om\'{e}trie, points entiers et courbes
  enti\`eres}, Ann. Sci. \'{E}c. Norm. Sup\'{e}r. (4) \textbf{42} (2009),
  no.~2, 221--239. \MR{2518077}

\bibitem[Aut11]{Autissier:NonDensity}
\bysame, \emph{Sur la non-densit\'{e} des points entiers}, Duke Math. J.
  \textbf{158} (2011), no.~1, 13--27. \MR{2794367}

\bibitem[BM22]{BrunebarbeMaculan:IntegralPointsLarge}
Yohan Brunebarbe and Marco Maculan, \emph{Counting integral points on varieties
  with large fundamental group}, Preprint, arXiv:2205.05436 (May 30, 2022).

\bibitem[Bru20]{Brunebarbe:IncreasingHyperbolicity}
Yohan Brunebarbe, \emph{Increasing hyperbolicity of varieties supporting a
  variation of {H}odge structures with level structures}, Preprint,
  arXiv:2007.12965 (July 25, 2020).

\bibitem[CLZ09]{CLZ}
Pietro Corvaja, Aaron Levin, and Umberto Zannier, \emph{Integral points on
  threefolds and other varieties}, Tohoku Math. J. (2) \textbf{61} (2009),
  no.~4, 589--601. \MR{2598251 (2011a:11125)}

\bibitem[Cu]{MO:largeenough}
Will Chen~(\url{https://mathoverflow.net/users/15242/will-chen}), \emph{Are
  {''l}arge enoug{h''} finite etale covers arithmetic?}, MathOverflow,
  URL:\url{https://mathoverflow.net/q/364290} (version: 2020-06-28).

\bibitem[CZ02]{CorvajaZannier:IntegralPointsCurves}
Pietro Corvaja and Umberto Zannier, \emph{A subspace theorem approach to
  integral points on curves}, C. R. Math. Acad. Sci. Paris \textbf{334} (2002),
  no.~4, 267--271. \MR{1891001}

\bibitem[CZ04]{CorvajaZannier:IntegralPointsSurfaces}
P.~Corvaja and U.~Zannier, \emph{On integral points on surfaces}, Ann. of Math.
  (2) \textbf{160} (2004), no.~2, 705--726. \MR{2123936}

\bibitem[Fal91]{FaltingsLang1}
Gerd Faltings, \emph{Diophantine approximation on abelian varieties}, Ann. of
  Math. (2) \textbf{133} (1991), no.~3, 549--576. \MR{1109353}

\bibitem[Fal02]{Faltings:NewApproachDiophantineApproximations}
\bysame, \emph{A new application of {D}iophantine approximations}, A panorama
  of number theory or the view from {B}aker's garden ({Z}\"{u}rich, 1999),
  Cambridge Univ. Press, Cambridge, 2002, pp.~231--246. \MR{1975455}

\bibitem[Har77]{Har}
R.~Hartshorne, \emph{Algebraic geometry}, Springer-Verlag, New York, 1977,
  Graduate Texts in Mathematics, No. 52. \MR{0463157 (57 \#3116)}

\bibitem[Har00]{Harari:WAandFundGrps}
D.~Harari, \emph{Weak approximation and non-abelian fundamental groups}, Ann.
  Sci. \'{E}cole Norm. Sup. (4) \textbf{33} (2000), no.~4, 467--484.
  \MR{1832820}

\bibitem[HL20]{HeierLevin:Degeneracy}
Gordon Heier and Aaron Levin, \emph{On the degeneracy of integral points and
  entire curves in the complement of nef effective divisors}, J. Number Theory
  \textbf{217} (2020), 301--319. \MR{4140631}

\bibitem[Jou83]{Jouanolou:Bertini}
Jean-Pierre Jouanolou, \emph{Th{\'e}oremes de {Bertini} et applications}, Prog.
  Math., vol.~42, Birkh{\"a}user, Cham, 1983 (French).

\bibitem[Kol93]{Kollar:ShafarevichMaps}
J\'{a}nos Koll\'{a}r, \emph{Shafarevich maps and plurigenera of algebraic
  varieties}, Invent. Math. \textbf{113} (1993), no.~1, 177--215. \MR{1223229}

\bibitem[Lev09]{Levin}
Aaron Levin, \emph{Generalizations of {S}iegel's and {P}icard's theorems}, Ann.
  of Math. (2) \textbf{170} (2009), no.~2, 609--655. \MR{2552103 (2010k:11116)}

\bibitem[Mil17]{Milne:AlgGrp}
J.~S. Milne, \emph{Algebraic groups}, Cambridge Studies in Advanced
  Mathematics, vol. 170, Cambridge University Press, Cambridge, 2017, The
  theory of group schemes of finite type over a field. \MR{3729270}

\bibitem[Poo04]{Poonen:BertiniFinite}
Bjorn Poonen, \emph{Bertini theorems over finite fields}, Ann. of Math. (2)
  \textbf{160} (2004), no.~3, 1099--1127. \MR{2144974}

\bibitem[Ser00]{Serre:LocalAlgebra}
Jean-Pierre Serre, \emph{Local algebra}, Springer Monographs in Mathematics,
  Springer-Verlag, Berlin, 2000, Translated from the French by CheeWhye Chin
  and revised by the author. \MR{1771925}

\bibitem[Sie14]{Siegel:Integral}
Carl~L. Siegel, \emph{\"{U}ber einige {A}nwendungen diophantischer
  {A}pproximationen [reprint of {A}bhandlungen der {P}reu\ss ischen {A}kademie
  der {W}issenschaften. {P}hysikalisch-mathematische {K}lasse 1929, {N}r. 1]},
  On some applications of {D}iophantine approximations, Quad./Monogr., vol.~2,
  Ed. Norm., Pisa, 2014, pp.~81--138. \MR{3330350}

\bibitem[{Sta}15]{stacks-project}
The {Stacks Project Authors}, \emph{\emph{{S}tacks {P}roject}},
  http://stacks.math.columbia.edu, 2015, Accessed on November 28, 2022.

\bibitem[Voj87]{Vojta87}
Paul Vojta, \emph{Diophantine approximations and value distribution theory},
  Lecture Notes in Mathematics, vol. 1239, Springer-Verlag, Berlin, 1987.
  \MR{883451 (91k:11049)}

\bibitem[Voj96]{Vojta:IntegralPointsSemiAbelian1}
\bysame, \emph{Integral points on subvarieties of semiabelian varieties. {I}},
  Invent. Math. \textbf{126} (1996), no.~1, 133--181. \MR{1408559}

\bibitem[Voj99]{Vojta:IntegralPointsSemiAb2}
\bysame, \emph{Integral points on subvarieties of semiabelian varieties. {II}},
  Amer. J. Math. \textbf{121} (1999), no.~2, 283--313. \MR{1680329}

\bibitem[Zan05]{Zannier:IntegralPointsComplementRamification}
Umberto Zannier, \emph{On the integral points on the complement of
  ramification-divisors}, J. Inst. Math. Jussieu \textbf{4} (2005), no.~2,
  317--330. \MR{2135140}

\end{thebibliography}
\bibliographystyle{amsalpha}

 \end{document}